\newtheorem{theorem}{Theorem}[section]
\newtheorem{lemma}[theorem]{Lemma}
\newtheorem{proposition}[theorem]{Proposition}
\newtheorem{corollary}[theorem]{Corollary}
\newtheorem{definition}[theorem]{Definition}
\newtheorem{remark}[theorem]{Remark}
\DeclareMathOperator{\Out}{Out}
\DeclareMathOperator{\Aut}{Aut}
\DeclareMathOperator{\Hom}{Hom}
\DeclareMathOperator{\Ind}{Ind}
\DeclareMathOperator{\im}{im}
\newcommand{\FIMod}{\mathsf{FI\mbox{-}Mod}}
\newcommand{\FBMod}{\mathsf{FB\mbox{-}Mod}}
\newcommand{\Vect}{\mathsf{Vect}}
\newcommand{\FI}{\mathsf{FI}}
\newcommand{\FB}{\mathsf{FB}}
\title{On the FI-module structure of $H^i(\Gamma_{n,s})$}
\author{Amin Saied}
\address{Department of Mathematics\\ Cornell University\\ 120 Malott Hall\\ Ithaca\\ NY\\ 14850}
\email{as2789@cornell.edu}
\keywords{FI-modules, Representation stability, Automorphism groups of free groups}
\subjclass{20J06, 20F28}
\begin{document}
\maketitle
\begin{abstract}
The groups $\Gamma_{n,s}$ are defined in terms of homotopy equivalences of certain graphs, and are natural generalisations of $\Out(F_n)$ and $\Aut(F_n)$. They have appeared frequently in the study of free group automorphisms, for example in proofs of homological stability in \cite{HV1, HV2} and in the proof that $\Out(F_n)$ is a virtual duality group in \cite{BestvinaFeighn1}. More recently, in \cite{CHKV1}, their cohomology $H^i(\Gamma_{n,s})$, over a field of characteristic zero, was computed in ranks $n=1, 2$ giving new constructions of unstable homology classes of $\Out(F_n)$ and $\Aut(F_n)$. In this paper we show that, for fixed $i$ and $n$, this cohomology $H^i(\Gamma_{n,s})$ forms a finitely generated FI-module of stability degree $n$ and weight $i$, as defined by Church-Ellenberg-Farb in \cite{CEF}. We thus recover that for all $i$ and $n$, the sequences $\{H^i(\Gamma_{n,s})\}_{s\geq0}$ satisfy representation stability, but with an improved stable range of $s \geq i+n$ which agrees with the low dimensional calculations made in \cite{CHKV1}. Another important consequence of this FI-module structure is the existence of character polynomials which determine the character of the $\mathfrak{S}_s$-module $H^i(\Gamma_{n,s})$ for all $s \geq i+n$. In particular this implies that, for fixed $i$ and $n$, the dimension of $H^i(\Gamma_{n,s})$, is given by a polynomial in $s$ for all $s\geq i+n$. We compute explicit examples of such character polynomials to demonstrate this phenomenon.
\end{abstract}

\section{Introduction}

It is well known that the group of outer automorphisms of the free group of rank $n$ can be described as the space of self-homotopy equivalences of a graph $X_n$ of rank $n$
, up to homotopy, i.e., 
\begin{equation}
\Out(F_n) \cong \pi_0(HE(X_n)). \notag
\end{equation}
Similarly the full group of automorphisms of the free group of rank $n$ is the space of homotopy equivalences of a graph $X_{n,1}$ of rank $n$ with a distinguished basepoint $\partial$, up to homotopy,
\begin{equation}
\Aut(F_n) \cong \pi_0(HE(X_{n,1})), \notag
\end{equation}
where homotopies are required to fix the basepoint throughout.\\

\begin{figure}[h]
\begin{center}
\begin{tikzpicture}[scale=1]
\draw (0,0) circle (1cm);
\draw[rounded corners = 30pt] (0,-1) -- (-0.7,0) -- (0,1);
\draw[rounded corners = 30pt] (0,-1) -- (0.7,0) -- (0,1);
\node at (0,-1.7) {$X_3$};

\begin{scope}[xshift = 5cm]
\draw (0,0) circle (1cm);
\draw[rounded corners = 30pt] (0,-1) -- (-0.7,0) -- (0,1);
\draw[rounded corners = 30pt] (0,-1) -- (0.7,0) -- (0,1);
\node at (0,-1.7) {$X_{3,1}$};
\draw (320:1) -- (320:1.5) node[above right] {$\partial$};
\node at (320:1.5) {$\bullet$};
\end{scope}
\end{tikzpicture}
\end{center}
\caption{Examples of rank 3 graphs that can be used to define $\Out(F_3)$ and $\Aut(F_3)$.}
\end{figure}
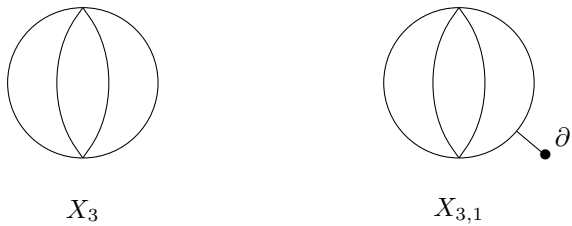

There is a natural generalisation then, where we let $X_{n,s}$ be a graph, by which we mean a connected finite 1-dimensional CW-complex, of rank $n$ with $s$ marked points $\partial=\{x_1, \cdots, x_s\}$. We should then consider the group of self-homotopy equivalences of $X_{n,s}$ fixing $\partial$ pointwise, modulo homotopies through such maps, i.e.,
\begin{equation}
\Gamma_{n,s} := \pi_0(HE(X_{n,s})). \notag
\end{equation}

In this paper we study the structure of the cohomology $H^i(\Gamma_{n,s})$, always over a field of characteristic zero, as a sequence of $\mathfrak{S}_s$-modules. The symmetric group $\mathfrak{S}_s$ acts on $H^i(\Gamma_{n,s})$ as follows. A homotopy equivalence $h:X_{n,s} \rightarrow X_{n,s}$ permuting $\partial$ induces an automorphism of $\Gamma_{n,s}$ by conjugation. This automorphism depends, \emph{a priori}, on the choice of $h$, however, on the level of cohomology it depends only on the permutation. Indeed, if $h$ fixes $\partial$ pointwise then the induced automorphism is inner, and thus induces the identity on cohomology.\\

The groups $\Gamma_{n,s}$ have been used, for example, to show that $\Out(F_n)$ and $\Aut(F_n)$ satisfy homological stability in \cite{HV1, HV2}, and they appeared in \cite{BestvinaFeighn1} in the proof that $\Out(F_n)$ is a virtual duality group. More recently they were used in \cite{CHKV1} to investigate the so called unstable cohomology of $\Out(F_n)$ and $\Aut(F_n)$ by means of an `assembly map'
\begin{equation}
H^i(\Gamma_{n_1, s_1}) \otimes \cdots \otimes H^i(\Gamma_{n_k, s_k}) \rightarrow H^i(\Gamma_{n,s}). \notag
\end{equation}

In particular, in \cite{CHKV1} they compute $H^i(\Gamma_{n,s})$ as an $\mathfrak{S}_s$-module for rank $n=1,2$ and use these computations to assemble homology classes in the unstable range of $\Out(F_n)$ and $\Aut(F_n)$. Moreover these computations show that, in rank $n=1,2$ and for fixed $i\geq 0$ the sequence $\{H^i(\Gamma_{n,s})\}_{s\geq0}$ satisfies representation stability; a representation theoretic analogue of homological stability defined by Thomas Church and Benson Farb in \cite{CF} (see Definition \ref{defrepstab}).
In \cite{CHKV1} they use an alternate description of $\Gamma_{n,s}$ as a quotient of a certain mapping class group of a three-manifold, together with general results about representation stability of mapping class groups, to deduce that for any fixed $i$ and $n$ the groups $H^i(\Gamma_{n,s})$ satisfy representation stability with stable range $s\geq 3i$. However, the calculations made in \cite{CHKV1} in rank $n=1,2$ actually adhere to a bound of $s \geq i+n$. In this paper we improve the stable range to agree with these low rank calculations.\\

\textbf{Theorem A.} \emph{For fixed $i$ and $n$ the sequence $H^i(\Gamma_{n,s})$ is uniformly representation stable as $s \rightarrow \infty$ with stable range $s \geq n+i$.}\\

We show this by exhibiting that $H^i(\Gamma_{n,s})$ defines an FI-module (see Definition \ref{defFI}). Building on their work in \cite{CF}, and together with Jordan Ellenberg and Rohit Nagpal, the theory of FI-modules was developed \cite{CEF, CEFN}, facilitating the application of homological techniques to sequences of $\mathfrak{S}_s$-modules. We use these techniques to prove the following theorem.\\

\textbf{Theorem B.} \emph{$H^i(\Gamma_{n,\bullet})$ is a finitely generated FI-module of stability degree $n$ and weight $i$.}\\

An important feature of finitely generated FI-modules is the existence of character polynomials; integer-valued polynomials in $\mathbb{Q}[X_1, X_2, \ldots]$ where $X_i:\mathfrak{S}_s \rightarrow \mathbb{N}$ is the class function that counts the number of $i$-cycles. Let $\chi_{H^i(\Gamma_{n,s})}$ denote the character of the $\mathfrak{S}_s$-module $H^i(\Gamma_{n,s})$. 

\begin{corollary}\label{char} There exists a character polynomial $f \in \mathbb{Q}[X_1, \ldots, X_i]$ depending on $i$ and $n$ such that for all $s \geq i + n$ and all $\sigma \in \mathfrak{S}_s$,
$$\chi_{H^i(\Gamma_{n,s})}(\sigma) = f(\sigma).$$
In particular, the dimension of $H^i(\Gamma_{n,s})$ is given by the polynomial $f(s,0, \ldots, 0)$.
\end{corollary}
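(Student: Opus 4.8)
To prove Corollary~\ref{char} the plan is to feed Theorem B into the general machinery of \cite{CEF}. First I would recall the basic existence statement for character polynomials: if $V$ is a finitely generated FI-module over a field of characteristic zero, then there is a unique polynomial $P_V \in \mathbb{Q}[X_1, X_2, \ldots]$, where $X_j$ is the class function on $\mathfrak{S}_s$ counting $j$-cycles, such that $\chi_{V_s}(\sigma) = P_V(\sigma)$ for all sufficiently large $s$ and all $\sigma \in \mathfrak{S}_s$. The two refinements of this statement that I need, both from \cite{CEF}, are: (i) the weight controls the polynomial, in the sense that $\mathrm{weight}(V) \le d$ forces $\deg P_V \le d$ and forces $P_V$ to involve only the variables $X_1, \ldots, X_d$; and (ii) the stability degree controls the range of validity, in the sense that the equality $\chi_{V_s} = P_V$ already holds for every $s$ at least the sum of the stability degree and the weight of $V$.

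The second step is purely a matter of substituting the invariants computed in Theorem B. Since $H^i(\Gamma_{n,\bullet})$ is a finitely generated FI-module of weight $i$ and stability degree $n$, statement (i) produces a character polynomial $f \in \mathbb{Q}[X_1, \ldots, X_i]$, and statement (ii) gives $\chi_{H^i(\Gamma_{n,s})}(\sigma) = f(\sigma)$ for all $\sigma \in \mathfrak{S}_s$ once $s \ge n + i$. This is precisely the first assertion of the corollary.

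For the statement about dimensions I would evaluate this character identity at the identity element $\sigma = \mathrm{id}$ of $\mathfrak{S}_s$. The identity permutation of $s$ letters has $s$ fixed points and no $j$-cycles for $j \ge 2$, so $X_1(\mathrm{id}) = s$ while $X_j(\mathrm{id}) = 0$ for $j \ge 2$; since $\chi_{H^i(\Gamma_{n,s})}(\mathrm{id})$ is exactly $\dim H^i(\Gamma_{n,s})$, we conclude that $\dim H^i(\Gamma_{n,s}) = f(s, 0, \ldots, 0)$ for all $s \ge n+i$, a polynomial in $s$ of degree at most $i$. There is no real obstacle beyond Theorem B here; the only point demanding attention is to invoke the quantitatively sharp form of the results of \cite{CEF}, so that the stable range comes out as $n+i$ (stability degree plus weight) rather than a weaker bound.
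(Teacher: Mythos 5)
Your proposal is correct and follows the paper's own route exactly: it invokes the quantitative character-polynomial theorem of Church--Ellenberg--Farb (Theorem \ref{CEF334}), substitutes weight $i$ and stability degree $n$ from Theorem B to get $f \in \mathbb{Q}[X_1,\ldots,X_i]$ valid for $s \geq i+n$, and reads off the dimension by evaluating at the identity. No gaps; this is precisely how the paper deduces Corollary \ref{char}.
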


One consequence of this result is that, for $s$ sufficiently large, the character $\chi_{i,n}$ is insensitive to cycles of length greater than $i$. We highlight this phenomenon by computing examples of these stable character polynomials in Section \ref{subsec_character_poly}.\\

Theorem A and Corollary \ref{char} follow immediately from Theorem B by results of Church-Ellenberg-Farb in \cite{CEF} that we state below (Proposition \ref{CEF255}, Proposition \ref{CEF334}). In Section \ref{SecFI} we recall the basic definitions and properties of FI-modules which are then used in Section \ref{Sec:Groups} to show that $H^i(\Gamma_{n,s})$ forms an FI-module. In Section \ref{SecSSA} we use a Leray-Serre spectral sequence to prove Theorem B.\\

\subsection*{Acknowledgements.} The author thanks his thesis advisor Martin Kassabov for his insight and for many helpful conversations.

\section{Representation Stability and FI-modules} \label{SecFI}
We work throughout over a field $\mathbbm{k}$ of characteristic 0. Let $\mathfrak{S}_s$ denote the symmetric group on $s$ letters. Recall that the irreducible representations of $\mathfrak{S}_s$ correspond to partitions $\lambda$ of $s$, denoted $\lambda \vdash s$, where $\lambda = (\lambda_1, \lambda_2, \ldots, \lambda_k)$ such that $\lambda_1\geq \lambda_2\geq \cdots \geq \lambda_k$ and $\sum_i \lambda_i = s =: |\lambda|$. We denote the irreducible representation corresponding to $\lambda \vdash s$ by $P_\lambda$. Let $\lambda \vdash q$. It will be useful to denote by $\lambda[s]$ the `padded' partition $(s-|\lambda|, \lambda_1, \lambda_2, \cdots)$ and by $P_{\lambda[s]}=P(\lambda)_s$ its corresponding irreducible representation.\\

Let $P$ be a $\mathfrak{S}_a$-module, and $Q$ an $\mathfrak{S}_b$-module, then we denote the induced representation by
\begin{equation}
P \circ Q = \Ind_{\mathfrak{S}_a\times \mathfrak{S}_b}^{\mathfrak{S}_{a+b}} P \otimes Q \notag
\end{equation}
(see, for example, Fulton-Harris \cite{FH}).\\

Also, following \cite{CHKV1}, we denote by $V^{\wedge k}$ the $\mathfrak{S}_k$-module which is isomorphic as a vector space to $V^{\otimes k}$ where $\mathfrak{S}_k$ acts by permuting the factors and multiplying by the sign of the permutation. That is, let $\mbox{alt}=P_{(1^k)}$ denote the alternating representation, then
\begin{equation}
V^{\wedge k} = V^{\otimes k} \otimes \mbox{alt}. \notag
\end{equation}

For convenience, we recall from \cite{CEF} the definitions of representation stability and of FI-modules.

\begin{definition} A sequence $\{V_s\}$ of finite dimensional $\mathfrak{S}_s$-modules together with linear maps $\phi_s:V_s \rightarrow V_{s+1}$ is called \textbf{consistent} if for each $s$ and each $g \in \mathfrak{S}_s$ the following diagram commutes:
\begin{equation}
\xymatrix{ V_s \ar[r]^{\phi_s} \ar[d]_g & V_{s+1} \ar[d]^{i(g)} \\
V_s \ar[r]_{\phi_s} & V_{s+1}
} \notag
\end{equation}
where $i : \mathfrak{S}_s \rightarrow \mathfrak{S}_{s+1}$ is the standard inclusion.
\end{definition}

\begin{definition}\label{defrepstab} A consistent sequence $\{V_s\}$ is called \textbf{representation stable} if for sufficiently large $s$ each of the following holds.
\begin{enumerate}
\item \emph{(Injectivity)} The maps $\phi_s:V_s \rightarrow V_{s+1}$ are injective.
\item \emph{(Surjectivity)} The $\mathfrak{S}_{s+1}$-span of $\phi_s(V_s)$ equals all of $V_{s+1}$.
\item \emph{(Multiplicities)} The decomposition of $V_s$ into irredicible $\mathfrak{S}_s$-modules is of the form
\begin{equation}
V_s = \bigoplus_{\lambda} c_{\lambda,s} P(\lambda)_s \notag
\end{equation}
with multiplicities $0 \leq c_{\lambda,s} \leq \infty$. Moreover, for each $\lambda$, the multiplicities $c_{\lambda, s}$ are eventually independent of $s$.
\end{enumerate}
The sequence is called \textbf{uniformly representation stable} with stable range $s \geq S$ if in addition, the multiplicities $c_{\lambda,s}$ are independent of $s$ for all $s \geq S$ with no dependence on $\lambda$.
\end{definition}

Let $\FI$ be the category whose objects are finite sets and whose morphisms are injections. This is equivalent to the category whose objects are natural numbers $\textbf{s}$ and whose morphisms $\textbf{t} \rightarrow \textbf{s}$ correspond to injections $\{1, \ldots, t\} \rightarrow \{1, \ldots, s\}$.

\begin{definition}\label{defFI} An \textbf{FI-module} (over a field $\mathbbm{k}$) is a functor $V$ from $\FI$ to the category $\Vect_{\mathbbm{k}}$ of vector spaces over $\mathbbm{k}$. We denote the vector space $V(\textbf{s})$ by $V_s$ and refer to the maps $V(\phi)$ for $\phi$ a morphism in $\FI$ as the structure maps of $V$.
\end{definition}

\subsection{Stability degree and weight of an FI-module}
We recall the notions of stability degree and weight of an FI-module. These will imply representation stability and can be used to control the stable range of a sequence $\{V_s\}$ corresponding to an FI-module $V$.\\

Let $V_s$ be an $\mathfrak{S}_s$-module. Then we denote by $(V_s)_{\mathfrak{S}_s}$ the $\mathfrak{S}_s$-coinvariant quotient $V_s \otimes_{\mathbbm{k}\mathfrak{S}_s} \mathbbm{k}$.

\begin{definition} An FI-module $V$ has \textbf{stability degree} $t$ if for all $a \geq 0$, the maps $(V_{s+a})_{\mathfrak{S}_s} \rightarrow (V_{s+1+a})_{\mathfrak{S}_{s+1}}$ induced by the standard inclusions $I_s: \{1, \ldots, s\} \rightarrow \{1, \ldots, s+1\}$, are isomorphisms for all $s \geq t$.\\

We say $V$ has \textbf{injectivity degree} $\mathcal{I}$ (resp. \textbf{surjectivity degree} $\mathcal{S}$) if the maps $(V_{s+a})_{\mathfrak{S}_s} \rightarrow (V_{s+1+a})_{\mathfrak{S}_{s+1}}$ are injective $\forall \ s \geq \mathcal{I}$ (resp. surjective $\forall \ s \geq \mathcal{S}$). We say $V$ has \textbf{stability type} $(\mathcal{I, S})$.
\end{definition}

\begin{definition} An FI-module $V$ has \textbf{weight} $\leq d$ if for all $s \geq 0$ every irreducible component $P(\lambda)_s$ of $V_s$ has $|\lambda| \leq d$.
\end{definition}

\begin{remark}
A key property of weight is that it is preserved under subquotients and extensions. In fact, there is an alternate definition of weight: the collection of FI-modules over $\mathbbm{k}$ of weight $\leq d$ is the minimal collection which contains all FI-modules generated in degree $\leq d$ and is closed under subquotients and extensions. For more details see \cite{CEF}.
\end{remark}

Together, finite weight and stability of an FI-module $V$ imply representation stability of the corresponding sequence of $\mathfrak{S}_s$-modules $\{V_s\}$. Moreover, the stability degree and weight give a measure of control on the representation stable range. The following result of Church-Ellenberg-Farb is the key to deducing representation stability of $\{ H^i(\Gamma_{n,s}) \}_{s \geq 0}$ from our FI-module $H^i(\Gamma_{n,\bullet})$.

\begin{proposition}[\cite{CEF}, Proposition 3.3.3] \label{CEF255} Let $V$ be an FI-module of weight $d$ and stability degree $t$. Then the sequence $\{V_s\}$ is uniformly representation stable with stable range $s \geq t+d$.
\end{proposition}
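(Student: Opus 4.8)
The plan is to extract the three conditions of Definition~\ref{defrepstab}, in the sharp range $s\ge t+d$, from the two numerical hypotheses.

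Consistency of the sequence $\{V_s\}$ with the maps $\phi_s:=V(I_s)$ is automatic: applying the functor $V$ to the equality $i(g)\circ I_s=I_s\circ g$ of morphisms $\mathbf s\to\mathbf{s+1}$ in $\FI$ produces exactly the commuting square required of a consistent sequence. So the content of the proposition is the injectivity and surjectivity of the $\phi_s$ and, above all, the eventual constancy of the multiplicities $c_{\lambda,s}$.

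For the multiplicities I would set up a dictionary with the coinvariant spaces appearing in the definition of stability degree. Weight $\le d$ forces $V_s=\bigoplus_{|\lambda|\le d}c_{\lambda,s}P(\lambda)_s$, a finite sum, once $s$ is large enough that all padded partitions $\lambda[s]$ with $|\lambda|\le d$ are defined. For each $\ell$ with $0\le\ell\le d$ look at the $\mathfrak S_\ell$-representation $(V_s)_{\mathfrak S_{s-\ell}}$ --- the coinvariants of $V_s$ for a standardly embedded $\mathfrak S_{s-\ell}\subseteq\mathfrak S_s$, carrying the residual $\mathfrak S_\ell$-action; this is the $a=\ell$ instance of the spaces occurring in the stability-degree condition. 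Since we are in characteristic $0$, coinvariants are invariants, and Frobenius reciprocity together with Pieri's rule shows that, for $s$ in the relevant range, the multiplicity of $P_\mu$ ($\mu\vdash\ell$) in $(V_s)_{\mathfrak S_{s-\ell}}$ equals $c_{\mu,s}$ plus a correction term depending only on the $c_{\lambda,s}$ with $|\lambda|<\ell$. (For $\ell=0$ this reads $c_{\emptyset,s}=\dim(V_s)_{\mathfrak S_s}$.) Inverting this triangular system by induction on $\ell$, the multiplicities $c_{\lambda,s}$, $|\lambda|\le d$, are recovered from the $\mathfrak S_\ell$-isomorphism types of $(V_s)_{\mathfrak S_{s-\ell}}$, $\ell\le d$.

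Now the stability-degree hypothesis controls precisely those spaces: taking $a=\ell$ in the definition, the maps $(V_s)_{\mathfrak S_{s-\ell}}\to(V_{s+1})_{\mathfrak S_{s+1-\ell}}$ induced by the standard inclusions are isomorphisms for $s\ge t+\ell$, and by construction they are $\mathfrak S_\ell$-equivariant, so each $(V_s)_{\mathfrak S_{s-\ell}}$ is $s$-independent as an $\mathfrak S_\ell$-representation for $s\ge t+\ell$. Feeding this into the dictionary and taking the worst case $\ell=d$, all $c_{\lambda,s}$ with $|\lambda|\le d$ are $s$-independent for $s\ge t+d$: this is condition (3), in uniform form. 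For injectivity of $\phi_s$ and surjectivity of the $\mathfrak S_{s+1}$-span of $\phi_s(V_s)$ in the same range, I would note that once the multiplicities have stabilised $\{V_s\}_{s\ge t+d}$ is, $\mathfrak S_s$-equivariantly, a finite direct sum of the ``stabilised irreducible'' pieces attached to the partitions $\lambda$ with $|\lambda|\le d$, for each of which these two properties in the relevant range are a direct check from the branching rule, and equivariance transports them to $V$. I expect the dictionary step to be the crux: converting ``coinvariants of all shifts stabilise'' into ``all irreducible multiplicities stabilise'', with the sharp constant $t+d$, demands the Frobenius--Pieri bookkeeping together with the easily overlooked point that the stabilising isomorphisms supplied by the stability-degree hypothesis are genuinely $\mathfrak S_\ell$-equivariant and not merely linear; the rest --- consistency, and the propagation of injectivity and surjectivity --- is formal.
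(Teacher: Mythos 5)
The paper itself gives no proof of this statement; it is imported verbatim from \cite{CEF} (Proposition 3.3.3), so your proposal can only be compared with the argument there. Your handling of consistency and of the multiplicity condition is essentially that standard argument: the stability-degree hypothesis controls the shifted coinvariants $(V_s)_{\mathfrak{S}_{s-\ell}}$ as $\mathfrak{S}_\ell$-representations (and you are right that the equivariance of the stabilising maps is the point one must not gloss over), Frobenius reciprocity identifies the multiplicity of $P_\mu$ there with $\dim\Hom_{\mathfrak{S}_s}(M(\mu)_s,V_s)$, and Pieri makes this unitriangular in the $c_{\lambda,s}$, so induction on $|\mu|$ gives constancy for $s\ge t+d$. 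One bookkeeping point you should make explicit: the coefficient $\langle M(\mu)_s,P(\lambda)_s\rangle$ depends on $s$ through the condition $\mu_1\le s-|\lambda|$, and when $t<d$ these coefficients can still jump inside the range $s\ge t+d$; the induction closes at the sharp bound only because $c_{\lambda,s}=0$ whenever $\lambda[s]$ is not a partition, which forces $c_{\mu,s}\equiv 0$ for every $\mu$ with $|\mu|+\mu_1>t+d$.

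The genuine gap is your final step, conditions (1) and (2). Constancy of multiplicities says nothing about the structure maps: a consistent sequence with $V_s\cong\bigoplus_\lambda c_\lambda P(\lambda)_s$ for every $s$ but with all $\phi_s=0$ has stabilised multiplicities and fails both injectivity and the span condition, so ``once the multiplicities have stabilised \dots\ equivariance transports them to $V$'' cannot be a proof; moreover $\{V_s\}$ need not decompose, as a consistent sequence or FI-module, into copies of the $P(\lambda)$'s (FI-modules are not semisimple even in characteristic $0$ --- e.g.\ $M(1)$ is a non-split extension), so there is nothing to transport along. These two conditions must be extracted directly from the injectivity and surjectivity degrees, with the weight bound used to detect kernels and cokernels in coinvariants: if $\ker\phi_s\neq 0$ for some $s\ge t+d$, it contains some $P(\lambda)_s$ with $|\lambda|\le d$; applying $\mathfrak{S}_{s-|\lambda|}$-coinvariants (exact in characteristic $0$, and nonzero on $P(\lambda)_s$ because $s\ge|\lambda|+\lambda_1$) exhibits a nonzero kernel of $(V_s)_{\mathfrak{S}_{s-|\lambda|}}\to(V_{s+1})_{\mathfrak{S}_{s+1-|\lambda|}}$, contradicting injectivity degree $\le t$ since $s-|\lambda|\ge t$. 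Dually, if the $\mathfrak{S}_{s+1}$-span of $\phi_s(V_s)$ were proper, the quotient would contain some $P(\lambda)_{s+1}$ with $|\lambda|\le d$ and nonzero $\mathfrak{S}_{s+1-|\lambda|}$-coinvariants, contradicting surjectivity degree $\le t$. With that replacement for your last paragraph, the outline becomes a correct proof in the same spirit as the cited one.
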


With this in hand it is clear that Theorem A is an immediate corollary to Theorem B. Another consequence of Theorem B is the existence of stable character polynomials.

\subsection{Character Polynomials}\label{subsec_character_poly}
For $j \geq 1$, let $X_j: \mathfrak{S}_s \rightarrow \mathbb{N}$ be the class function defined by
$$X_j(\sigma) = \mbox{number of $j$-cycles in $\sigma$}.$$
A polynomial in the variables $X_j$ is called a character polynomial. We define the degree of a character polynomial by setting $\deg(X_j) = j$. The following theorem of Church-Ellenberg-Farb says that characters of finitely generated FI-modules are eventually described by a single character polynomial, and moreover gives explicit bounds on the degree and the stable range of this polynomial in terms of weight and stability degree of the FI-module.

\begin{theorem}[\cite{CEF}, Theorem 3.3.4] \label{CEF334} Let $V$ be a finitely generated FI-module of weight $\leq d$ and stability degree $\leq t$. There exists a unique polynomial $f_V \in \mathbb{Q}[X_1, \dots, X_d]$ of degree at most $d$ such that for all $n \geq d + t$ and all $\sigma \in \mathfrak{S}_n$,
$$\chi_{V_n}(\sigma) = f_V(\sigma).$$
\end{theorem}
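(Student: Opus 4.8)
The plan is to recall the proof of this theorem of Church-Ellenberg-Farb, whose idea is to push the computation from $V$ down to its representation-stable irreducible constituents and then onto a family of ``free'' FI-modules whose characters can be written out by hand. First, by Proposition~\ref{CEF255}, for every $n\geq t+d$ the $\mathfrak{S}_n$-module $V_n$ splits as $\bigoplus_{|\lambda|\leq d}c_\lambda\,P(\lambda)_n$ with multiplicities $c_\lambda$ that do not depend on $n$, so $\chi_{V_n}=\sum_{|\lambda|\leq d}c_\lambda\,\chi_{P(\lambda)_n}$ on that range. It therefore suffices to prove that, for each $\lambda$ with $|\lambda|\leq d$, the function $n\mapsto\chi_{P(\lambda)_n}$ agrees on $n\geq t+d$ with a single character polynomial of degree $\leq|\lambda|$.

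For a partition $\mu$, with $m=|\mu|$, let $M(\mu)$ be the FI-module with $M(\mu)_n=P_\mu\circ(\text{trivial }\mathfrak{S}_{n-m}\text{-rep})=\Ind_{\mathfrak{S}_m\times\mathfrak{S}_{n-m}}^{\mathfrak{S}_n}\bigl(P_\mu\otimes\mathbbm{k}\bigr)$; concretely $M(\mu)_n$ is spanned by the injections $\{1,\dots,m\}\hookrightarrow\{1,\dots,n\}$ with a copy of $P_\mu$ attached along the source. Such an injection is fixed by $\sigma\in\mathfrak{S}_n$ precisely when its image is pointwise fixed, so the standard character formula for an induction from $\mathfrak{S}_m\times\mathfrak{S}_{n-m}$, trivial on the second factor, gives
\[
\chi_{M(\mu)_n}(\sigma)=\sum_{\substack{(s_j)_{j\geq1},\ s_j\geq0\\ \sum_j j\,s_j=m}}\Bigl(\prod_{j\geq1}\binom{X_j(\sigma)}{s_j}\Bigr)\,\chi_{P_\mu}\!\bigl(1^{s_1}2^{s_2}\cdots\bigr).
\]
Each summand is the value at $\sigma$ of a polynomial in $X_1,\dots,X_m$ of degree $\sum_j j\,s_j=m$, and the expression does not involve $n$; hence $\chi_{M(\mu)}$ is given by a single character polynomial $g_\mu$ of degree $\leq|\mu|$, valid for all $n\geq m$.

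To finish, I would relate the two families by a Pieri (horizontal-strip) computation: for $n\gg0$ one has $M(\mu)_n\cong\bigoplus_\lambda b_{\mu\lambda}\,P(\lambda)_n$ with $b_{\mu\mu}=1$ and $b_{\mu\lambda}=0$ unless $|\lambda|\leq|\mu|$ --- this is precisely the assertion that $M(\mu)$ has weight $|\mu|$. Ordered by $|\cdot|$, the matrix $(b_{\mu\lambda})$ over partitions with at most $d$ boxes is unitriangular, hence invertible over $\mathbb{Z}$, and so $\chi_{P(\lambda)_n}=\sum_{|\mu|\leq|\lambda|}a_{\lambda\mu}\,\chi_{M(\mu)_n}$ for $n$ in the stable range, with integers $a_{\lambda\mu}$. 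Combining with the first paragraph, $\chi_{V_n}=f_V$ for all $n\geq t+d$, where $f_V:=\sum_{|\lambda|\leq d}c_\lambda\sum_\mu a_{\lambda\mu}\,g_\mu\in\mathbb{Q}[X_1,\dots,X_d]$ has degree $\leq d$. Uniqueness is a separate and easy point: a character polynomial that vanishes on $\mathfrak{S}_n$ for all $n\geq t+d$ must be the zero polynomial, since evaluating on permutations that realize any prescribed collection of short cycles (available once $n$ is large) forces every coefficient to vanish.

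I expect the real work to be in the last step: setting up the unitriangular comparison between the $M(\mu)$ and the $P(\lambda)_n$, and --- more delicately --- verifying that this comparison, together with the stable decomposition of $V$, holds on exactly the range $n\geq t+d$ rather than merely for $n\gg0$; this is the point at which the stability-degree hypothesis is genuinely used. The degree bound $\deg f_V\leq d$, by contrast, is automatic once one knows that only $M(\mu)$ with $|\mu|\leq d$ can occur and that $\deg g_\mu\leq|\mu|$, while the induced-character computation of the second paragraph, though a little fiddly, is routine.
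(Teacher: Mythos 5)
This statement is quoted from \cite{CEF} (Theorem 3.3.4); the paper gives no proof of it, only the citation, together with the classical fact recorded in Section \ref{subsec_character_poly} that for each $\lambda$ there is a character polynomial $f_\lambda$ of degree $|\lambda|$ computing $\chi_{P(\lambda)_s}$ for all $s \geq |\lambda| + \lambda_1$. Your skeleton is exactly the proof in \cite{CEF}: use Proposition \ref{CEF255} to write $\chi_{V_n} = \sum_{|\lambda| \leq d} c_\lambda \chi_{P(\lambda)_n}$ with constant multiplicities for $n \geq t+d$, then reduce to polynomiality of $\chi_{P(\lambda)_n}$. Your induced-character formula for $\chi_{M(\mu)_n}$ and its degree bound are correct, as is the uniqueness argument (pad any prescribed short-cycle data with one long cycle).

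The loose end you flag is real, but it does not sit where you place it. The stability degree is spent entirely in the first step (constancy of the $c_\lambda$ for $n \geq t+d$); it plays no role in comparing $P(\lambda)_n$ with the $M(\mu)_n$. What you actually need is that $f_\lambda$ computes $\chi_{P(\lambda)_n}$ for every $n \geq |\lambda| + \lambda_1$, and this suffices for the stated range because any $\lambda$ with $c_\lambda \neq 0$ has $P(\lambda)_n \neq 0$ for all $n \geq t+d$, which forces $n \geq |\lambda| + \lambda_1$ there; so the comparison is never invoked outside its range of validity. Your one-shot inversion of the matrix $(b_{\mu\lambda})$ is only justified for $n \geq \max_{|\mu| \leq d}(|\mu| + \mu_1)$, which can be as large as $2d > t+d$. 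The fix is to induct on $|\lambda|$ rather than invert globally: Pieri gives, with multiplicity one, $M(\lambda)_n \cong P(\lambda)_n \oplus \bigoplus_{\lambda'} P(\lambda')_n$, the sum over $\lambda' \subsetneq \lambda$ with $\lambda/\lambda'$ a horizontal strip, and on the range $n \geq |\lambda| + \lambda_1$ this index set is independent of $n$ (the condition for $P(\lambda')_n$ to occur is $n \geq |\lambda'| + \lambda_1$, implied by $n \geq |\lambda| + \lambda_1$). Since $\lambda' \subseteq \lambda$ gives $|\lambda'| + \lambda'_1 \leq |\lambda| + \lambda_1$, the inductive hypothesis applies to each $\lambda'$ on that same range, so $f_\lambda := g_\lambda - \sum_{\lambda'} f_{\lambda'}$ works for all $n \geq |\lambda| + \lambda_1$ and has degree $\leq |\lambda|$. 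With that replacement your argument is complete and agrees with \cite{CEF} (and with the algorithmic construction of the $f_\lambda$ in \cite{GG}).
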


Corollary \ref{char} will thus follow immediately from Theorem $B$. It is worth pointing out that in particular, this shows that the dimension of $H^i(\Gamma_{n,s})$ is eventually (as $s$ grows) given by a single character polynomial.\\

In \cite{CHKV1} Conant-Hatcher-Kassabov-Vogtmann describe the $\mathfrak{S}_s$-module structure of $H^i(\Gamma_{n,s})$ for $n=1,2$, from which one can read off their irreducible $\mathfrak{S}_s$-module decomposition, that is, a decomposition into terms of the form $P(\lambda)_s$. We have the following classical fact, which underpins the theorem above. Fix a  partition $\lambda$. There exists a unique character polynomial $f_\lambda$ such that for any $s \geq |\lambda| + \lambda_1$, the character of the $\mathfrak{S}_s$-module $P(\lambda)_s$ is given by $f_\lambda$. In \cite{GG} they describe an algorithm constructing $f_\lambda$ that we will use in conjunction with calculations from \cite{CHKV1} to compute some explicit examples of character polynomials of various $H^i(\Gamma_{n,s})$. It is cleanest to describe these character polynomials in terms of the notation $(x)_j := x(x-1)\cdots(x-j+1)$.\\

\paragraph{\textbf{Example.}} Fix $n=1$ and $i=2$. From \cite{CHKV1}, Proposition 2.7 we obtain the following decomposition of $H^2(\Gamma_{1,s})$ into irreducible $\mathfrak{S}_s$-modules.
$$H^2(\Gamma_{1,s}) = P\left(\Yvcentermath1\scriptsize~\yng(1,1)~\right)_s.$$
Using the algorithm from \cite{GG} we obtain the character polynomial $f_{\Yvcentermath1\tiny~\yng(1,1)}$ for $P\left(\Yvcentermath1\tiny~\yng(1,1)~\right)$.\\
Corollary \ref{char} implies that, for $s \geq 3$, the character $\chi_{2,1}$ of $H^2(\Gamma_{1,s})$ is given by the polynomial,
$$f_{2,1}(X_1, X_2) = f_{\Yvcentermath1\tiny~\yng(1,1)}(X_1, X_2) =   \frac{1}{2}\cdot(X_1)_2 - (X_1) - (X_2) + 1. $$
We can use this, for example, to obtain that for $s \geq 3$ the dimension of $H^2(\Gamma_{1,s})$ is $$\frac{s(s-1)}{2}-s+1 = {s-1 \choose 2}.$$
Notice that this agrees with the description of $H^2(\Gamma_{1,s}) = \bigwedge^2 \mathbbm{k}^{s-1}$ given in \cite{CHKV1}.\\

\paragraph{\textbf{Example.}} Fix $n=2$ and $i=4$. From \cite{CHKV1}, Theorem 2.10 we obtain the following stable decomposition of $H^4(\Gamma_{2,s})$ into irreducible $\mathfrak{S}_s$-modules. For $s \geq 6$,
$$H^4(\Gamma_{2,s}) = P\left(\Yvcentermath1\scriptsize~\yng(2)~\right)_s \oplus P\left(\Yvcentermath1\scriptsize~\yng(2,1)~\right)_s \oplus P\left(\Yvcentermath1\scriptsize~\yng(2,2)~\right)_s.$$
Using the algorithm from \cite{GG} we obtain the character polynomials $f_{\Yvcentermath1\tiny~\yng(2)}, f_{\Yvcentermath1\tiny~\yng(2,1)}$ and $f_{\Yvcentermath1\tiny~\yng(2,2)}$.\\
Corollary \ref{char} implies that, for $s \geq 6$, the character $\chi_{4,2}$ of $H^4(\Gamma_{2,s})$ is given by the sum of these three character polynomials,
$$f_{4,2}(X_1, X_2, X_3, X_4) =   \frac{1}{12}(X_1)_4 + (X_2)_2 - X_1\cdot X_3. $$
For instance, let $\tau = (1~2)(3~4)(5~6\cdots 100) \in \mathfrak{S}_{100}$. Then $\chi_{4,2}(\tau)=2$.\\

Both the stable decomposition of $H^i(\Gamma_{n,s})$ and the stable character polynomials describing $\chi_{i,n}$ evident in these examples are general features of being a finitely generated FI-module. It thus remains to prove Theorem B, which we will do by analysing a spectral sequence of FI-modules. It turns out that the $E_2$-page of that spectral sequence admits a particularly nice description in terms of \emph{free FI-modules}. We recall their definition now.\\

\subsection{Free FI-modules}
Let $\FIMod$ denote the category whose objects are FI-modules over $\mathbbm{k}$ and whose morphisms are given by natural transformations. This is an abelian category in which operations like kernels, quotients, subobjects, etc., are all defined pointwise.\\

Let $\FB$ denote the category of finite sets with bijections, and similarly, let $\FBMod$ denote the category whose objects are FB-modules, i.e., functors from $\FB$ to $\Vect_{\mathbbm{k}}$, and whose morphisms are natural transformations. An FB-module $W$ determines, and is determined by, a collection of $\mathfrak{S}_n$-modules $W_n$ for each $n \in \mathbb{N}$. In particular, any $\mathfrak{S}_n$-module $W_n$ determines an FB-module. There is a natural forgetful functor
\begin{equation}
\pi: \FIMod \rightarrow \FBMod \notag
\end{equation}
sending an FI-module $V$ to the sequence of $\mathfrak{S}_n$-modules it determines.

\begin{definition}[\cite{CEF}, Definition 2.2.2]
Define the functor
\begin{equation}
M:\FBMod \rightarrow \FIMod \notag
\end{equation}
to be the left adjoint to $\pi$.\\

Explicitly $M(P_\lambda)$ takes a finite set $\mathbf{s}=\{1, \ldots, s\}$ to $P_\lambda \circ P_{(s-|\lambda|)}$. We denote $M(P_\lambda)$ simply by $M(\lambda)$.
\end{definition}

In \cite{CEF} they describe the stability degree and weight of $M(\lambda)$. We stress that we are working over a field $\mathbbm{k}$ of characteristic zero, otherwise we cannot guarantee such strong bounds on surjectivity degree.

\begin{proposition}[see \cite{CEF}]\label{stabM} The FI-module $M(\lambda)$ has stability type $(0, \lambda_1)$ and weight at most $|\lambda|$.
\end{proposition}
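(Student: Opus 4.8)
The plan is to verify the two bounds — weight at most $|\lambda|$ and stability type $(0,\lambda_1)$ — directly from the explicit description $M(\lambda)_s = P_\lambda \circ P_{(s-|\lambda|)}$, which for $s \geq |\lambda|$ decomposes by the Pieri rule (or Littlewood–Richardson) into a sum of irreducibles $P_\mu$ with $\mu \vdash s$. First I would recall that $P_\lambda \circ P_{(m)} = \bigoplus_\mu P_\mu$, the sum over all $\mu$ obtained from $\lambda$ by adding $m$ boxes, no two in the same column; equivalently, the $\mu$ appearing are exactly those of the form $\nu[s]$ where $\nu$ is obtained from $\lambda$ by removing some boxes from its first row (and $\nu[s]$ is the padded partition). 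From this one reads off immediately that every irreducible constituent $P(\nu)_s$ of $M(\lambda)_s$ has $|\nu| \leq |\lambda|$, since $\nu \subseteq \lambda$ up to the first row; this gives weight $\leq |\lambda|$.

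For the stability degree, the key computation is the $\mathfrak{S}_s$-coinvariants. Since taking coinvariants $(-)_{\mathfrak{S}_s}$ picks out the multiplicity of the trivial representation $P_{(s)} = P(\emptyset)_s$, I would compute $\dim (M(\lambda)_{s+a})_{\mathfrak{S}_{s+a}}$ as the multiplicity of the trivial $\mathfrak{S}_{s+a}$-representation in $P_\lambda \circ P_{(s+a-|\lambda|)}$. By Frobenius reciprocity this multiplicity equals $\dim (P_\lambda)_{\mathfrak{S}_\lambda}$-type data — more precisely, it is the multiplicity of $P_{(|\lambda|)}$ in $P_\lambda$ restricted appropriately, which is $1$ if $\lambda = (\lambda_1)$ is a single row and $0$ otherwise — but the cleaner route is: $(M(\lambda))_s$ has a natural FI-module structure and $(M(\lambda)_{s+a})_{\mathfrak{S}_{s+a}}$ is computed in \cite{CEF} to stabilize as soon as $s + a - |\lambda| \geq a$, i.e. the "padding row" $s+a-|\lambda|$ is long enough to dominate $\lambda_1$, which happens for $s \geq \lambda_1$; taking $a = 0$ shows nothing and we need the map $(M(\lambda)_{s+a})_{\mathfrak{S}_s} \to (M(\lambda)_{s+1+a})_{\mathfrak{S}_{s+1}}$ to be an isomorphism, which by the branching rule analysis holds once $s \geq 0$ for the surjectivity side (hence surjectivity degree $0$) but requires $s \geq \lambda_1$ for injectivity. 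I would make this precise by using that $M(\lambda) = M(P_\lambda)$ is the free FI-module, so its coinvariants are governed by the combinatorics of adding boxes, and the stabilization of the relevant multiplicities is exactly the statement that once the first part of $\mu$ is $\geq \lambda_1$ no new constituents appear or disappear.

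The main obstacle I anticipate is pinning down the precise stable range for the coinvariant maps — distinguishing the injectivity threshold from the surjectivity threshold — rather than just showing eventual stability. The surjectivity degree being $0$ relies genuinely on characteristic zero (so that $M(\lambda)$ is projective/flat and coinvariants behave well, and the structure maps are split surjective after passing to coinvariants); in positive characteristic the surjectivity bound degrades, which is why the proposition is stated over a field of characteristic zero. I would therefore isolate the characteristic-zero input as the semisimplicity of $\mathbbm{k}\mathfrak{S}_s$, use it to reduce every claim to a multiplicity count of irreducibles via the Pieri rule, and then the bounds $(0, \lambda_1)$ and weight $|\lambda|$ drop out of the shape of the partitions $\nu[s]$ that occur. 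Alternatively, and perhaps most efficiently, I would simply cite that this is established in \cite{CEF} (it is their computation of the invariants of $M(W)$ together with Proposition 3.2.4 there) and indicate the Pieri-rule argument as the reason.
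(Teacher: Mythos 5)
Your weight argument is essentially sound: by the Pieri rule every constituent of $M(\lambda)_s = P_\lambda \circ P_{(s-|\lambda|)}$ is $P_\mu$ with $\mu \supseteq \lambda$ and $\mu/\lambda$ a horizontal strip, so writing $\mu=\nu[s]$ gives $\nu\subseteq\lambda$ and hence $|\nu|\le|\lambda|$. (Your parenthetical description of the $\nu$ that occur --- ``obtained from $\lambda$ by removing some boxes from its first row'' --- is not correct; the boxes are removed with no two in the same column, i.e.\ $\lambda_{i+1}\le\nu_i\le\lambda_i$, as one sees already for $\lambda=(2,2)$. The conclusion $|\nu|\le|\lambda|$ is unaffected.) Note also that the paper does not reprove any of this: its proof of Proposition \ref{stabM} is a straight citation of three results of \cite{CEF} (Propositions 3.1.7, 3.2.6 and 3.2.4), so your closing fallback of simply citing \cite{CEF} coincides with the paper's route.

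The genuine gap is in the stability-type half: you conclude surjectivity degree $0$ and injectivity degree $\lambda_1$, which is the reverse of the statement --- stability type $(\mathcal{I},\mathcal{S})=(0,\lambda_1)$ means injectivity degree $0$ and surjectivity degree $\lambda_1$ --- and your version is false. Already for $\lambda=(1)$, $a=0$, $s=0$ the map $(M(\lambda)_0)_{\mathfrak{S}_0}=0 \rightarrow (M(\lambda)_1)_{\mathfrak{S}_1}=\mathbbm{k}$ is injective but not surjective, so the surjectivity degree cannot be $0$; what is true (and is \cite{CEF}, Proposition 3.1.7) is that \emph{free} FI-modules always have injectivity degree $0$, while characteristic zero is what yields surjectivity degree $\le\lambda_1$ (their Proposition 3.2.6). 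The surrounding computation is also too loose to pin down the range: identifying coinvariants with the multiplicity of the trivial representation only covers $a=0$ (for $a>0$ one must track the restriction to $\mathfrak{S}_s\times\mathfrak{S}_a$ and the resulting larger coinvariant spaces), and the threshold you extract, ``$s+a-|\lambda|\ge a$, i.e.\ $s\ge\lambda_1$'', does not parse, since $s+a-|\lambda|\ge a$ is $s\ge|\lambda|$. Getting the two degrees in the correct order is not cosmetic here: the paper's spectral sequence argument (Lemma \ref{Spec}) is driven precisely by the asymmetric type $(0,n)$ of the $E_2$-terms, so a proof of this proposition must establish injectivity degree $0$ and surjectivity degree $\lambda_1$, not the transpose.
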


\begin{proof}
This follows from three results in \cite{CEF}. The injectivity degree is given in Proposition 3.1.7, the surjectivity degree in Proposition 3.2.6 and the weight in Proposition 3.2.4.
\end{proof}

Our notation for irreducible representation of $\mathfrak{S}_s$ as $P(\lambda)_s$ is suggestive of the structure of an FI-module, and indeed this is the case.

\begin{proposition}[\cite{CEF}, Proposition 3.4.1] \label{Prop:Plambda} For any partition $\lambda$ there exists an FI-module $P(\lambda)$, obtained as a sub-FI-module of the free FI-module $M(\lambda)$, satisfying
$$P(\lambda)(\textbf{s}) =  \left\{\begin{array}{ccc}P(\lambda)_s &  & s \geq |\lambda|+\lambda_1 \\0 &  & \mbox{else}\end{array}\right.$$
$P(\lambda)$ has weight $\leq |\lambda|$ and stability degree $\leq \lambda_1$.
\end{proposition}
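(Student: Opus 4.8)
The plan is to build $P(\lambda)$ one level at a time as a compatible system of isotypic subspaces of $M(\lambda)$ --- this is where characteristic zero is used --- and then to check that this system is a sub-functor. By Pieri's rule, for $s \geq |\lambda|$ the $\mathfrak{S}_s$-module $M(\lambda)_s = P_\lambda \circ P_{(s-|\lambda|)}$ decomposes as $\bigoplus_\mu P_\mu$, where $\mu$ runs over the partitions of $s$ for which $\mu/\lambda$ is a horizontal strip, each occurring with multiplicity one. When $s \geq |\lambda| + \lambda_1$ the padded partition $\lambda[s] = (s-|\lambda|, \lambda_1, \lambda_2, \dots)$ is a genuine partition and $\lambda[s]/\lambda$ is a single row, hence a horizontal strip; so $P_{\lambda[s]} = P(\lambda)_s$ occurs in $M(\lambda)_s$ with multiplicity exactly one. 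When $|\lambda| \leq s < |\lambda| + \lambda_1$ there is no such partition. I would therefore \emph{define} $P(\lambda)_s \subseteq M(\lambda)_s$ to be the (unique, by multiplicity one) $P_{\lambda[s]}$-isotypic subspace for $s \geq |\lambda| + \lambda_1$, and $P(\lambda)_s = 0$ otherwise, which already yields the pointwise description in the statement. It remains to see that $s \mapsto P(\lambda)_s$ is a sub-FI-module of $M(\lambda)$ and to bound its weight and stability degree.

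Since $\FI$ is generated as a category by the automorphism groups $\mathfrak{S}_s = \Aut(\mathbf{s})$ together with the one-point inclusions $\iota_s\colon \mathbf{s} \hookrightarrow \mathbf{s+1}$, the sub-FI-module property reduces to two points: each $P(\lambda)_s$ is $\mathfrak{S}_s$-stable, which is automatic since it is an isotypic subspace; and $\phi_s := M(\lambda)(\iota_s)$ sends $P(\lambda)_s$ into $P(\lambda)_{s+1}$. Only the range $s \geq |\lambda|+\lambda_1$ requires attention (for $s = |\lambda|+\lambda_1-1$ the source is zero). There $\phi_s$ is $\mathfrak{S}_s$-equivariant and $P(\lambda)_s \cong P_{\lambda[s]}$ is irreducible, so $\phi_s(P(\lambda)_s)$ lies in the $P_{\lambda[s]}$-isotypic part of the restriction of $M(\lambda)_{s+1}$ to $\mathfrak{S}_s$. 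By additivity of restriction and the branching rule, that isotypic part is assembled from those Pieri constituents $P_\mu$ of $M(\lambda)_{s+1}$ for which $\mu$ is obtained from $\lambda[s]$ by adding one box. The crux is the combinatorial fact that, when $s \geq |\lambda|+\lambda_1$, the only partition of $s+1$ that is simultaneously a Pieri constituent of $M(\lambda)_{s+1}$ (so that $\mu/\lambda$ is a horizontal strip) and obtained from $\lambda[s]$ by adding a box is $\mu = \lambda[s+1]$: placing the box below the first row either violates a partition inequality or makes $\mu/\lambda$ fail to be a horizontal strip --- the new box then shares its column with a box of $\mu/\lambda$ one row higher --- and one checks directly that the hypothesis $s \geq |\lambda|+\lambda_1$ rules out the borderline cases. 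Hence $\phi_s(P(\lambda)_s)$ lands in the $P_{\lambda[s]}$-isotypic part of the restriction of $P_{\lambda[s+1]}$, which sits inside the $P_{\lambda[s+1]}$-submodule $P(\lambda)_{s+1}$ of $M(\lambda)_{s+1}$. I expect this combinatorial step to be the main obstacle; the remaining verifications are routine.

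The weight bound is immediate: $M(\lambda)$ has weight $\leq |\lambda|$ by Proposition \ref{stabM}, and weight passes to sub-FI-modules. For the stability degree, over a field of characteristic zero the functor of $\mathfrak{S}_s$-coinvariants is exact, so applying it to $P(\lambda) \hookrightarrow M(\lambda)$ gives injections $(P(\lambda)_{s+a})_{\mathfrak{S}_s} \hookrightarrow (M(\lambda)_{s+a})_{\mathfrak{S}_s}$ compatible with the transition maps; since the transition maps of $M(\lambda)$ are isomorphisms for $s \geq \lambda_1$ by Proposition \ref{stabM}, the composite $(P(\lambda)_{s+a})_{\mathfrak{S}_s} \to (P(\lambda)_{s+1+a})_{\mathfrak{S}_{s+1}} \hookrightarrow (M(\lambda)_{s+1+a})_{\mathfrak{S}_{s+1}}$ agrees with $(P(\lambda)_{s+a})_{\mathfrak{S}_s} \hookrightarrow (M(\lambda)_{s+a})_{\mathfrak{S}_s} \xrightarrow{\ \sim\ } (M(\lambda)_{s+1+a})_{\mathfrak{S}_{s+1}}$ and is therefore injective for $s \geq \lambda_1$. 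To upgrade injective to bijective it suffices to see that source and target have the same dimension, and this is a short symmetric-functions computation: restricting $P_{\lambda[s+a]}$ along $\mathfrak{S}_s \times \mathfrak{S}_a \hookrightarrow \mathfrak{S}_{s+a}$ and extracting the $\mathfrak{S}_s$-trivial part, one finds (once $s \geq \lambda_1$) that the relevant skew shape $\lambda[s+a]/(s)$ breaks into $\lambda$ and a single row, so by Pieri again the resulting $\mathfrak{S}_a$-module, and in particular its dimension, is independent of $s$ for $s \geq \lambda_1$. Thus the transition maps of $P(\lambda)$ are isomorphisms for all $s \geq \lambda_1$, i.e.\ $P(\lambda)$ has stability degree $\leq \lambda_1$. (One could alternatively argue by induction on $|\lambda|$, noting that the quotient $M(\lambda)/P(\lambda)$ has all of its irreducible constituents of the form $P(\nu)_s$ with $|\nu| < |\lambda|$ and $\nu_1 \leq \lambda_1$.)
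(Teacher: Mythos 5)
Your proposal is correct, but be aware that the paper itself contains no proof of this proposition: it is quoted directly from \cite{CEF} (their Proposition 3.4.1), so the ``paper's proof'' is a citation. What you have written is in effect a self-contained reproof of the cited result: you isolate the (multiplicity-one, by Pieri) copy of $P_{\lambda[s]}$ inside $M(\lambda)_s$, verify closure under the one-point inclusions by combining Pieri with the branching rule --- your key combinatorial claim, that for $s \geq |\lambda|+\lambda_1$ the only one-box addition to $\lambda[s]$ whose difference from $\lambda$ is still a horizontal strip is $\lambda[s+1]$, is correct --- deduce the weight bound from Proposition \ref{stabM} and closure of weight under sub-FI-modules, and obtain the stability degree from exactness of coinvariants in characteristic zero plus a skew-shape computation showing $(P(\lambda)_{s+a})_{\mathfrak{S}_s}$ has dimension independent of $s$ once $s \geq \lambda_1$. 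This buys independence from \cite{CEF} except through Proposition \ref{stabM} (itself only cited in the paper), at the cost of redoing symmetric-group combinatorics that \cite{CEF} handles with their general machinery. Two small repairs, neither a real gap: first, for $s \geq |\lambda|+\lambda_1$ the skew shape $\lambda[s]/\lambda$ is not ``a single row'' --- it is a disconnected, staircase-like horizontal strip (row $j+1$ of it occupies columns $\lambda_{j+1}+1,\dots,\lambda_j$) --- and since all you use is that it is a horizontal strip, only the wording needs changing; second, in the dimension count for the stability degree, the description of $\lambda[s+a]/(s)$ as ``$\lambda$ plus a single row'' implicitly assumes $a \geq |\lambda|$, and you should note that when $a < |\lambda|$ both coinvariant spaces vanish (the trivial $\mathfrak{S}_s$-representation cannot occur in $P_{\lambda[s+a]}$, and $P(\lambda)_{s+a}$ may itself be zero), so the map is trivially an isomorphism there.
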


\subsection{Homological techniques for FI-modules}
One advantage of the FI-modules viewpoint is that it brings homological techniques to bear. In particular, the following result governs the dynamics of stability type through our spectral sequence. Recall that $\mathbbm{k}$ has characteristic 0.

\begin{proposition}[\cite{CEF}, Lemma 6.3.2]\label{CEF2.44}\label{CEF2.45}~\\
\begin{enumerate}
\item Let $U$, $V$, $W$ be FI-modules with stability type $(\ast, A), (B, C), (D, \ast)$ respectively, and let
\begin{equation}
U \xrightarrow{f} V \xrightarrow{g} W \notag
\end{equation}
be a complex of FI-modules (i.e., $g\circ f =0 $). Then $\ker g / \im f$ has injectivity degree $\leq \max(A,B)$ and surjectivity degree $\leq \max(C,D)$.

\item Let $V$ be an FI-module with a filtration
\begin{equation}
0 = F_jV \subseteq F_{j-i}V \subseteq \cdots \subseteq F_1V \subseteq F_0V=V \notag
\end{equation}
by FI-modules $F_iV$. The successive quotients $F_iV/F_{i+1}V$ have stability type $(\mathcal{I,S})$ for all $i$ if and only if $V$ has stability type $(\mathcal{I,S})$.
\end{enumerate}
\end{proposition}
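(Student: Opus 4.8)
The plan is to reduce both assertions to elementary diagram chases among sequences of $\mathbbm{k}$-vector spaces, the essential point being that since $\mathbbm{k}$ has characteristic $0$ the $\mathfrak{S}_s$-coinvariants functor $(-)_{\mathfrak{S}_s}$ is exact: it is computed by the averaging idempotent $\frac{1}{s!}\sum_{g\in\mathfrak{S}_s} g$, so coinvariants agree with invariants and the functor is a direct summand of the identity. Fixing $a\geq 0$ and writing $\overline{V}_s := (V_{s+a})_{\mathfrak{S}_s}$ for the associated sequence of vector spaces, with transition maps $\beta_s\colon\overline{V}_s\to\overline{V}_{s+1}$ induced by the standard inclusions $\{1,\dots,s\}\hookrightarrow\{1,\dots,s+1\}$, the injectivity (resp.\ surjectivity) degree of $V$ is by definition a bound, uniform in $a$, on the $s$ beyond which every $\beta_s$ is injective (resp.\ surjective). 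Exactness of $(-)_{\mathfrak{S}_s}$ then gives two things: it turns the complex $U\xrightarrow{f}V\xrightarrow{g}W$ into a complex $\overline{U}_s\to\overline{V}_s\to\overline{W}_s$ and identifies $\overline{(\ker g/\im f)}_s$ with its homology at the middle term; and it turns a short exact sequence of FI-modules into a short exact sequence of such vector-space sequences, compatibly with all transition maps.

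For part (1) I would then simply chase the resulting commuting diagram, for each fixed $a$. For injectivity beyond $\max(A,B)$: a transition-killed class in $\overline{(\ker g/\im f)}_s$ is represented by some $x\in\overline{V}_s$ with $\beta_s(x)=f_{s+1}(u)$; surjectivity of the transitions of $\overline{U}$ (valid for $s\geq A$) lets us write $u=\alpha_s(u')$, naturality of $f$ gives $\beta_s(x)=\beta_s(f_s(u'))$, and injectivity of the transitions of $\overline{V}$ (valid for $s\geq B$) forces $x\in\im f_s$. Dually, for surjectivity beyond $\max(C,D)$: lift a class in $\overline{(\ker g/\im f)}_{s+1}$ to $y\in\ker g_{s+1}$, use surjectivity of the transitions of $\overline{V}$ ($s\geq C$) to write $y=\beta_s(x)$, and then $g_{s+1}(y)=0$ together with naturality of $g$ gives $\gamma_s(g_s(x))=0$, so injectivity of the transitions of $\overline{W}$ ($s\geq D$) yields $g_s(x)=0$, i.e.\ $x$ represents a preimage. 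Both chases are routine; the only care needed is tracking which of the relevant transition maps is being invoked at which index.

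For part (2) I would induct on the length $j$ of the filtration, the case $j=1$ being the tautology $V=F_0V/F_1V$. For the inductive step, note that $F_1V$ carries a filtration of length $j-1$ whose successive quotients are precisely the $F_iV/F_{i+1}V$ with $i\geq 1$, and apply $(-)_{\mathfrak{S}_s}$ to $0\to F_1V\to V\to V/F_1V\to 0$ to get a short exact sequence of vector-space sequences. The direction actually used downstream — successive quotients of stability type $(\mathcal{I},\mathcal{S})\ \Longrightarrow\ V$ of stability type $(\mathcal{I},\mathcal{S})$ — follows by feeding the inductive hypothesis for $F_1V$, together with the hypothesis for $V/F_1V$, into part (1) applied to the three-term complex $F_1V\to V\to V/F_1V$: its middle homology vanishes, but the bounds part (1) supplies on the injectivity and surjectivity degrees of that homology translate, via a short chase in the short exact sequence of transition maps, into the desired bounds for $V$. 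The converse direction is a similar but slightly more delicate chase: injectivity of transition maps is inherited freely by sub-FI-modules and surjectivity freely by quotients, so the real content is to propagate surjectivity down to $F_1V$ and injectivity up to $V/F_1V$, each of which one extracts from the short exact sequence of transition maps using the remaining two terms.

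The main obstacle is not conceptual — there is no hard input beyond exactness of coinvariants in characteristic $0$, which is exactly why the statement pauses to recall $\mathrm{char}\,\mathbbm{k}=0$ — but organisational: the whole proof is bookkeeping, and it succeeds or fails on keeping straight, across the commuting squares and through the inductive step, which transition map (of which FI-module, injective versus surjective) is used at which value of $s$. (This is \cite{CEF}, Lemma 6.3.2, and the sketch above is the argument given there.)
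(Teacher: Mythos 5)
The paper itself offers no proof of this proposition --- it is quoted verbatim from \cite{CEF}, Lemma 6.3.2 --- so the comparison is with the standard argument there. Your part (1) is correct and is essentially that argument: in characteristic zero the coinvariants functor $(-)_{\mathfrak{S}_s}$ is exact, hence turns the complex into a complex of the sequences $(U_{s+a})_{\mathfrak{S}_s}\to(V_{s+a})_{\mathfrak{S}_s}\to(W_{s+a})_{\mathfrak{S}_s}$ and commutes with taking middle homology, and your two chases give exactly the bounds $\max(A,B)$ and $\max(C,D)$.

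Part (2), however, has a genuine gap. First, a local one: applying part (1) to the complex $F_1V\to V\to V/F_1V$ yields nothing, because its middle homology is zero, so the degrees part (1) bounds are those of the zero FI-module and cannot ``translate into the desired bounds for $V$.'' The implication actually needed (graded pieces of stability type $(\mathcal{I},\mathcal{S})$ imply the same for $V$) comes from the direct chase you only mention in passing: apply the exact coinvariants functor to $0\to F_1V\to V\to V/F_1V\to 0$ and check that if the transition maps of the sub and of the quotient sequences are injective (resp.\ surjective), so is that of the middle term; then induct on filtration length. Second, and more seriously, your sketch of the converse --- ``propagate surjectivity down to $F_1V$ and injectivity up to $V/F_1V$'' by a chase --- cannot be completed, because that implication fails in this generality. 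Let $V$ be the constant FI-module $\mathbbm{k}$ (so $V_S=\mathbbm{k}$ for every finite set $S$, all structure maps the identity), and let $A\subseteq V$ be the sub-FI-module with $A_\emptyset=0$ and $A_S=\mathbbm{k}$ for $S\neq\emptyset$. Then $V$ has stability type $(0,0)$, but at $a=0$, $s=0$ the map $(A_0)_{\mathfrak{S}_0}\to(A_1)_{\mathfrak{S}_1}$ is $0\to\mathbbm{k}$, so $A$ has surjectivity degree $1$, and dually $V/A$ (which is $\mathbbm{k}$ on the empty set and $0$ elsewhere) has injectivity degree $1$; so the graded pieces of the filtration $0\subseteq A\subseteq V$ do not have stability type $(0,0)$. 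Thus the ``only if'' direction requires either a different formulation or additional hypotheses, and no amount of diagram chasing will produce it. Fortunately only the ``if'' direction is used in the paper (in deducing Theorem B from Lemma \ref{Spec}), and that direction your argument does capture once the spurious appeal to part (1) is replaced by the chase just described.
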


\section{The groups $\Gamma_{n,s}$ and the FI-modules they determine} \label{Sec:Groups}
Let $X_{n,s}$ be a graph of rank $n$ with $s$ marked points $\partial := \{x_1, \ldots, x_s\}$. We defined $\Gamma_{n,s}$ as the space self homotopy equivalences of $X_{n,s}$ fixing $\partial$ (pointwise) modulo homotopies that fix $\partial$ throughout. The group operation on $\Gamma_{n,s}$ is induced by composition of homotopy equivalences, which is clearly associative and admits an identity element. In \cite{CHKV1} they prove the existence of inverse as follows. Let $f:X\rightarrow Y$ be a homotopy equivalence of graphs that sends $\partial_X=\{x_1, \ldots, x_s\} \subset X$ bijectively to $\partial_Y= \{y_1, \ldots, y_s\} \subset Y$. Consider the mapping cylinder of $f$, or rather its quotient $Z$ obtained by collapsing the $s$ intervals $x_i \times I$. By observing that the inclusion of $Y$ into $Z$ is a homotopy equivalence, and that $Z$ deformation retracts onto $X$ we obtain an inverse to $f$ that acts as $f^{-1}$ on $\partial_Y$ as desired. Moreover, this argument shows that $\Gamma_{n,s}$ does not depend on the choice of graph $X_{n,s}$ up to isomorphism.\\

The proof of Theorem B in the case when $n>1$ relies on a spectral sequence argument that itself is borne of certain short exact sequences we describe now (for full details see \cite{CHKV1}, Section 1.2).\\

Let $n >1$, $s \geq 0$ and write $X = X_{n,s}$. Let $E$ denote the space of homotopy equivalences of $X$ with no requirement that $\partial$ be fixed, and let $D$ be the space of homotopy equivalences of $X$ that are required to fix $\partial$. Thus $\Gamma_{n,s} \cong \pi_0(D)$ and $\Gamma_{n,0}=\Out(F_n) \cong \pi_0(E)$. There is a map $E \rightarrow X^s$ by $f \mapsto (f(x_1), \ldots, f(x_s))$ which is a fibration with fiber $D$ over the point $(x_1, \ldots, x_s)$. The long exact sequence of homotopy groups of this fibration ends,
\begin{equation}
\pi_1(E) \rightarrow \pi_1(X^s) \rightarrow \Gamma_{n,s} \rightarrow \Out(F_n) \rightarrow 1. \label{LES}
\end{equation}
To see that $\pi_1(E)=1$ we consider the case $s=1$, when (\ref{LES}) says,
$$\pi_1(E)\rightarrow F_n \rightarrow \Aut(F_n) \rightarrow \Out(F_n) \rightarrow 1.$$
The map $F_n \rightarrow \Aut(F_n)$ can be seen to be conjugation, and it's kernel, $\pi_1(E)$, is thus trivial. Thus we have proved the following.

\begin{proposition}[see \cite{CHKV1}, Proposition 1.2] \label{Prop:SES} If $n>1$ there is a short exact sequence,
$$1 \rightarrow F_n^s \rightarrow \Gamma_{n,s} \rightarrow \Out(F_n) \rightarrow 1$$
\end{proposition}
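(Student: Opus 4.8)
The plan is to extract the short exact sequence directly from the four-term exact sequence \eqref{LES} once we know its two ends. The rightmost map $\Gamma_{n,s} \to \Out(F_n)$ is the surjection $\pi_0(D) \to \pi_0(E)$ induced by the inclusion $D \hookrightarrow E$ (equivalently, by the fibration $E \to X^s$), and it is surjective because $X^s$ is path-connected, so that every component of $E$ meets the fiber $D$ over $(x_1,\dots,x_s)$. The content is therefore to identify the kernel as $F_n^s$, and for that I would argue that $\pi_1(X^s) \cong F_n^s$ (since $X = X_{n,s}$ is a rank-$n$ graph, hence homotopy equivalent to a wedge of $n$ circles, so $\pi_1(X) \cong F_n$ and $\pi_1$ of the $s$-fold product is $F_n^s$), and that the map $\pi_1(X^s) \to \Gamma_{n,s}$ in \eqref{LES} is injective, i.e. that $\pi_1(E) \to \pi_1(X^s)$ is the zero map.

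First I would set up the fibration $E \to X^s$, $f \mapsto (f(x_1),\dots,f(x_s))$, with fiber $D$ over the basepoint $(x_1,\dots,x_s)$, and write down the tail of its long exact sequence of homotopy groups, obtaining \eqref{LES}; the identification $\pi_0(D) \cong \Gamma_{n,s}$ and $\pi_0(E) \cong \Out(F_n)$ is by definition of these groups, using that $X$ has rank $n$. Next I would reduce the vanishing of $\pi_1(E)$ to the case $s=1$: the homotopy type of $E$ (homotopy equivalences of $X$ with no basepoint condition) does not depend on $s$, only on the rank $n$, so it suffices to understand $\pi_1(E)$ when $X = X_{n,1}$. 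In that case \eqref{LES} reads
\[
\pi_1(E) \to \pi_1(X) \to \Gamma_{n,1} \to \Out(F_n) \to 1,
\]
i.e. $\pi_1(E) \to F_n \to \Aut(F_n) \to \Out(F_n) \to 1$, using $\Gamma_{n,1} \cong \Aut(F_n)$. The map $F_n \to \Aut(F_n)$ here is the one that records, for a loop $\gamma$ in $X$ based at $x_1$, the effect of dragging the basepoint around $\gamma$; this is precisely conjugation by $\gamma$, so the map $F_n \to \Aut(F_n)$ is $g \mapsto \mathrm{ad}_g$, which is injective (the center of $F_n$ is trivial for $n \geq 1$). Hence the preceding map $\pi_1(E) \to F_n$ is zero, and since $\pi_1(E)$ depends only on $n$, the same vanishing holds for all $s$. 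Therefore in \eqref{LES} the map $\pi_1(X^s) \to \Gamma_{n,s}$ is injective, and combined with exactness at $\Gamma_{n,s}$ and the surjection onto $\Out(F_n)$ we get
\[
1 \to \pi_1(X^s) \to \Gamma_{n,s} \to \Out(F_n) \to 1,
\]
and $\pi_1(X^s) \cong \pi_1(X)^s \cong F_n^s$ finishes the proof.

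The main obstacle is the precise identification of the connecting map $F_n = \pi_1(X_{n,1}) \to \Aut(F_n) = \Gamma_{n,1}$ with the adjoint (conjugation) homomorphism; everything else is formal manipulation of the long exact sequence of a fibration together with the standard computation of $\pi_1$ of a graph and of a product. This identification is exactly the "point-pushing" description of the connecting homomorphism of the fibration $E \to X$, and it is the one subtle geometric input — I would either cite \cite{CHKV1}, Section 1.2, for it, or spell out that moving the marked point $x_1$ along a loop $\gamma$ and tracking the induced self-homotopy-equivalence of $X$ rel $x_1$ yields conjugation by $[\gamma]$ on $\pi_1(X,x_1) = F_n$. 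One should also note in passing that the hypothesis $n > 1$ is not actually needed for the exactness argument per se but is retained to match the statement; the essential use of $n \geq 1$ is the triviality of the center of $F_n$.
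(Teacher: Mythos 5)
Your proof is correct for the stated proposition and follows essentially the same route as the paper: the fibration $E \to X^s$ with fiber $D$, the tail of its long exact sequence, reduction to the $s=1$ case, and the identification of the connecting map $F_n \to \Aut(F_n)$ with conjugation, whose injectivity kills the image of $\pi_1(E)$. You are in fact slightly more careful than the paper in noting that only the vanishing of the map $\pi_1(E)\to\pi_1(X^s)$ is needed and in making explicit that $E$ depends only on the rank $n$.

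One correction to your closing remark, though: the hypothesis $n>1$ \emph{is} needed, not merely retained for cosmetic reasons. The center of $F_n$ is trivial only for $n\geq 2$; for $n=1$ the conjugation map $F_1=\mathbb{Z}\to\Aut(F_1)$ is trivial rather than injective (indeed $\pi_1(E)\neq 1$ in rank $1$, since the identity component of the self-equivalences of a circle is a circle), and the short exact sequence fails, which is exactly why the paper treats the rank-$1$ case separately and states the proposition only for $n>1$.
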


\subsection{Building the FI-module $H^i(\Gamma_{n, \bullet})$} The group cohomology $H^i(\Gamma_{n,s})$ admits an action of the symmetric group $\mathfrak{S}_s$, and thus defines an FB-module $H^i(\Gamma_{n,\bullet})$ taking the finite set $\textbf{s}$ to $H^i(\Gamma_{n,s})$. We now show that $H^i(\Gamma_{n,\bullet})$ actually determines an FI-module.

\begin{proposition} \label{Prop:FImodule}
Fix $i, n \geq 0$. $H^i(\Gamma_{n,\bullet})$ is an FI-module.
\end{proposition}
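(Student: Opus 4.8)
The plan is to realize the assignment $\mathbf{s} \mapsto H^i(\Gamma_{n,s})$ as a functor from $\FI$ to $\Vect_{\mathbbm{k}}$ by producing, for every injection $\phi \colon \mathbf{t} \hookrightarrow \mathbf{s}$, a structure map $H^i(\Gamma_{n,t}) \to H^i(\Gamma_{n,s})$ functorially. The natural source of such maps is a group homomorphism $\Gamma_{n,t} \to \Gamma_{n,s}$ going the ``wrong'' way, which then induces a map on cohomology in the correct direction. To build this homomorphism, I would use the geometric description of $\Gamma_{n,s}$: given an injection $\phi$, choose a homotopy equivalence $X_{n,s} \to X_{n,t}$ that collapses (a spanning forest connecting) the extra marked points $\{x_j : j \notin \operatorname{im}\phi\}$ down onto the marked points indexed by $\operatorname{im}\phi$, identified with $\partial_{X_{n,t}}$ via $\phi$. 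Equivalently, one can add the $s-t$ extra marked points to $X_{n,t}$ by attaching small whiskers (which does not change the homotopy type), producing a model for $X_{n,s}$; a self-homotopy equivalence of $X_{n,t}$ fixing $\partial$ extends to one of $X_{n,s}$ fixing the enlarged $\partial$ by acting as the identity near the new whiskers. This extension is well defined up to homotopy through such maps, and is compatible with composition, so it gives a homomorphism $\Gamma_{n,t} \to \Gamma_{n,s}$; applying $H^i(-)$ yields the structure map.

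The key steps, in order, are: (1) fix once and for all combinatorial models $X_{n,s}$ compatibly in $s$ (e.g. a fixed rank-$n$ graph with $s$ whiskers), so that an injection of index sets literally induces an inclusion of graphs; (2) show that a homotopy equivalence fixing the smaller $\partial$ extends to one fixing the larger $\partial$, and that the induced map $\Gamma_{n,t}\to\Gamma_{n,s}$ is well defined (independent of the chosen extension, using that homotopies through admissible maps on $X_{n,t}$ extend), a group homomorphism, and sends the standard inclusion to the standard inclusion; (3) check functoriality, i.e. that $\psi\circ\phi$ induces the composite of the induced homomorphisms, which reduces to the evident compatibility of whisker-additions; (4) conclude that $\phi \mapsto H^i(\text{induced homomorphism})$ is a functor $\FI \to \Vect_{\mathbbm{k}}$, noting that contravariance of $H^i$ turns the contravariant-looking construction on groups back into a covariant functor on $\FI$, and that on isomorphisms it recovers the $\mathfrak S_s$-action already described in the introduction (a permutation $h$ of $\partial$ acts by the conjugation automorphism it induces, which on cohomology depends only on the permutation).

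I expect the main obstacle to be step (2): verifying that the map $\Gamma_{n,t}\to\Gamma_{n,s}$ is genuinely well defined at the level of $\pi_0(HE(-))$. One must check that two different choices of extension of a given admissible homotopy equivalence are homotopic through admissible maps of $X_{n,s}$, and that an admissible homotopy on $X_{n,t}$ induces an admissible homotopy on $X_{n,s}$; this is where the explicit whisker model earns its keep, since away from the new whiskers everything is controlled by the $X_{n,t}$ data and on the whiskers the maps are (homotopic to) the identity rel endpoints. A clean alternative that sidesteps some of this is to work with the short exact sequence $1 \to F_n^s \to \Gamma_{n,s} \to \Out(F_n) \to 1$ of Proposition \ref{Prop:SES} (for $n>1$) together with the trivial case $n\le 1$: an injection $\phi$ induces the coordinate inclusion $F_n^t \hookrightarrow F_n^s$ compatibly with the identity on $\Out(F_n)$, hence a homomorphism of extensions $\Gamma_{n,t}\to\Gamma_{n,s}$, and functoriality is immediate from functoriality of $\mathbf{s}\mapsto F_n^s$ on $\FI$. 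I would present the whisker construction as the conceptual definition and remark that it agrees with the one read off from the short exact sequence.
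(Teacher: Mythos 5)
There is a genuine gap here, and it is a matter of direction. An FI-module structure on $H^i(\Gamma_{n,\bullet})$ requires, for each injection $\phi\colon \mathbf{t}\hookrightarrow\mathbf{s}$, a linear map $H^i(\Gamma_{n,t})\to H^i(\Gamma_{n,s})$. Since group cohomology with trivial coefficients is contravariant, such a map must come from a homomorphism $\Gamma_{n,s}\to\Gamma_{n,t}$, i.e.\ from a \emph{point-forgetting} map out of the larger group. Your construction goes the other way: extending a self-homotopy equivalence of $X_{n,t}$ over the added whiskers produces a homomorphism $\Gamma_{n,t}\to\Gamma_{n,s}$, and applying $H^i$ to it gives $H^i(\Gamma_{n,s})\to H^i(\Gamma_{n,t})$ --- a co-FI-module structure, not the required FI-module structure maps. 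The claim in your step (4) that contravariance of $H^i$ ``turns the contravariant-looking construction on groups back into a covariant functor on $\FI$'' would be right if your group-level construction were contravariant in $\phi$, but it is covariant ($\phi$ yields $\Gamma_{n,t}\to\Gamma_{n,s}$), so the composite is contravariant on $\FI$. For the same reason the two constructions you call ``equivalent'' are not: a collapse map $X_{n,s}\to X_{n,t}$ naturally induces (by conjugation with a homotopy inverse) a homomorphism $\Gamma_{n,s}\to\Gamma_{n,t}$, while whisker-extension induces $\Gamma_{n,t}\to\Gamma_{n,s}$; and your fallback via the extension of Proposition \ref{Prop:SES} has the identical direction problem.

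The paper's proof takes the route your first sentence gestures at and then abandons: choose a homotopy equivalence $f\colon X_{n,t}\to X_{n,s}$ realizing $\phi$ on marked points, and send $g\in\Gamma_{n,s}$ to the conjugate self-homotopy equivalence of $X_{n,t}$, obtaining a homomorphism $\Gamma_{n,s}\to\Gamma_{n,t}$. This is well defined only up to conjugation in $\Gamma_{n,t}$ (it depends on the choice of $f$), which suffices because inner automorphisms act trivially on cohomology; contravariance of $H^i$ then produces structure maps covariant in $\phi$. Note that the strict well-definedness you flag as the main obstacle is thereby sidestepped --- and indeed the paper remarks that $\Gamma_{n,\bullet}$ is \emph{not} a co-FI-group, so one should not expect the group-level maps to be well defined on the nose. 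Your whisker-extension homomorphisms $\Gamma_{n,t}\to\Gamma_{n,s}$ are not wasted: they are essentially the splittings of the point-forgetting maps that the paper invokes in Proposition \ref{Prop:Injective} to prove injectivity of the structure maps (and on homology they would induce maps in the FI direction). But as written they do not establish Proposition \ref{Prop:FImodule}.
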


\begin{proof}
It suffices to describe a functorial way to assign to an injection $\phi \in \Hom_\FI(\textbf{t}, \textbf{s})$ a linear map $H^i(\Gamma_{n,t}) \rightarrow H^i(\Gamma_{n,s})$. Fix a graph $X_{n,s}$ obtained by attaching $s$ hairs to the rose $R_n$ at its single vertex. The marked points are the 1-valent vertices of $X_{n,s}$, which we identify with $\textbf{s}$. Define $X_{n,t}$ similarly and identify its marked points with $\textbf{t}$. Pick a homotopy equivalence $f:X_{n,t} \rightarrow X_{n,s}$ that acts as $\phi$ on the marked points of $X_{n,t}$; that is, the marked point $x$ in $X_{n,t}$ should be sent to the marked points $\phi(x)$ in $X_{n,s}$. Let $g$ be a self homotopy equivalence of $X_{n,s}$ fixing the hairs so that $g$ determines an element of $\Gamma_{n,s}$. Now the conjugate $fgf^{-1}$ is a self homotopy equivalence of $X_{n,t}$ fixing its marked points $\textbf{t}$ and as such determines an element $h$ of $\Gamma_{n,t}$.\\

This procedure determines a map $\Gamma_{n,s} \rightarrow \Gamma_{n,t}$ that depends on the choice of $f$. However, up to conjugation by $\Gamma_{n,t}$ this element $h$ only depends on $\phi$, and thus induces a well-defined map on cohomology, which doesn't see inner automorphisms.
\end{proof}

\begin{remark}
If we choose $f$ that it only permutes the hairs of $X_{n,t}$ (i.e., induces the identity on $\pi_1$) then the map $\Gamma_{n,s} \rightarrow \Gamma_{n,t}$ can be thought of as \emph{forgetting} the $s-t$ points not in the image of $\phi$, and relabelling the hairs according to $\phi$.
\end{remark}

\begin{remark}
It is perhaps tempting to draw on the $\FI$ structure at the level of groups and try and make $\Gamma_{1,\bullet}$ a (contravarient) functor from $\FI$ to the category of groups; a co-FI-group in the language of \cite{CEF}. Indeed, to an injection $\phi \in \Hom_\FI(t,s)$ we described maps from $\Gamma_{n,s}$ to $\Gamma_{n,t}$ in the proof of Proposition \ref{Prop:FImodule}. However, the element $fgf^{-1}$ in $\Gamma_{n,t}$ depended on the choice of homotopy equivalence $f$ and as such it is false that $\Gamma_{n,\bullet}$ forms a co-FI-group in general. That being said, it is straightforward to show that $\Gamma_{1,\bullet}$ does form a co-FI-group. We have no need for that result here, and therefore don't do so.
\end{remark}

A useful observation is that the structure maps are always injective. 

\begin{proposition} \label{Prop:Injective}
The structure maps of $H^i(\Gamma_{n,\bullet})$ are injective.
\end{proposition}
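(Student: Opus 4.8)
The plan is to exhibit an explicit left inverse to each structure map, which automatically forces injectivity. For an injection $\phi \in \Hom_\FI(\mathbf{t},\mathbf{s})$, the structure map $H^i(\Gamma_{n,t}) \to H^i(\Gamma_{n,s})$ is induced (contravariantly on cohomology) by a group homomorphism $\Gamma_{n,s} \to \Gamma_{n,t}$ of the kind constructed in the proof of Proposition \ref{Prop:FImodule}. The key point is that any $\phi$ admits a one-sided inverse in $\FI$: since $\phi$ is injective there is a map $\psi \in \Hom_\FI(\mathbf{s},\mathbf{t})$ (not canonical, but that is irrelevant) with $\psi \circ \phi = \mathrm{id}_{\mathbf{t}}$. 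Indeed, send the image of $\phi$ back bijectively via $\phi^{-1}$ and send the remaining $s-t$ elements of $\mathbf{s}$ anywhere into $\mathbf{t}$ — this is still an injection provided $t \geq \dots$; if $t < s-t+|\mathrm{im}\,\phi|$ one instead only needs $\psi$ to be a set map splitting $\phi$, but to stay inside $\FI$ one can always arrange $\psi$ to be injective when $t \ge s - t$, and in the remaining cases the argument still goes through because what we actually need is a morphism of $H^i(\Gamma_{n,\bullet})$, and a splitting in $\FI$ does exist whenever $t\le s$ by choosing $\psi$ injective on $\mathrm{im}\,\phi$ and collapsing appropriately — more carefully, one notes every injection $\mathbf t \hookrightarrow \mathbf s$ is split in $\FI$ by an injection $\mathbf s \hookrightarrow \mathbf t$ precisely when no such constraint obstructs it, and here $t \le s$ guarantees we may pick $\psi$ with $\psi\phi = \mathrm{id}$.

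Rather than belabor that, the cleanest route is functoriality of $H^i(\Gamma_{n,\bullet})$ established in Proposition \ref{Prop:FImodule}: if $\psi \circ \phi = \mathrm{id}_{\mathbf t}$ in $\FI$ then applying the functor gives $H^i(\Gamma_{n,\bullet})(\psi \circ \phi) = H^i(\Gamma_{n,\bullet})(\phi) \circ H^i(\Gamma_{n,\bullet})(\psi)$ — wait, one must be careful about variance; the functor $H^i(\Gamma_{n,\bullet})$ is covariant on $\FI$, so $H^i(\Gamma_{n,\bullet})(\psi \circ \phi) = H^i(\Gamma_{n,\bullet})(\psi) \circ H^i(\Gamma_{n,\bullet})(\phi)$, whence $H^i(\Gamma_{n,\bullet})(\psi)$ is a left inverse to the structure map $H^i(\Gamma_{n,\bullet})(\phi)$. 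A left inverse forces $H^i(\Gamma_{n,\bullet})(\phi)$ to be injective, which is the claim. So the first step is: reduce to showing every $\phi \in \Hom_\FI(\mathbf t, \mathbf s)$ is a split monomorphism in $\FI$.

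The second step is to verify that split-monomorphism claim. For $t \le s$, given an injection $\iota\colon\{1,\dots,t\}\hookrightarrow\{1,\dots,s\}$, define $\psi\colon\{1,\dots,s\}\to\{1,\dots,t\}$ by $\psi(\iota(k)) = k$ and $\psi(j) = 1$ (say) for $j \notin \mathrm{im}\,\iota$; then $\psi\iota = \mathrm{id}$ but $\psi$ is not injective in general. To land in $\FI$ we instead realize the structure map via a \emph{different} description: as noted in the first remark after Proposition \ref{Prop:FImodule}, the structure map associated to $\phi$ can be taken to be the map $\Gamma_{n,s}\to\Gamma_{n,t}$ that forgets the $s-t$ unused hairs and relabels. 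A retraction is then furnished directly by choosing, for the reverse direction, the homotopy equivalence $X_{n,t}\to X_{n,s}$ that duplicates no topology and simply reinserts $s-t$ trivial hairs; composing "forget then reinsert" recovers the identity endofunctor-wise. Concretely this gives a group homomorphism $\Gamma_{n,t}\to\Gamma_{n,s}$ splitting the forgetful map, and passing to $H^i$ shows the structure map has a one-sided inverse on cohomology. The honest verification that these two group maps compose to the identity on $\Gamma_{n,t}$ is immediate from the graph picture: reinserting and then forgetting the same $s-t$ hairs is the identity homotopy equivalence, so the composite is the identity automorphism of $\Gamma_{n,t}$, hence the identity on $H^i$.

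The main obstacle I expect is purely bookkeeping: pinning down that the structure map of Proposition \ref{Prop:FImodule} genuinely agrees (up to the inner-automorphism ambiguity, which dies on cohomology) with the "forget-and-relabel" map, so that the explicit retraction built from the graph picture really is a retraction of the \emph{cohomological} structure map and not merely of some conjugate group map. This is exactly the subtlety flagged in the second remark — that $\Gamma_{n,\bullet}$ is not a co-FI-group — so the argument must be run at the level of cohomology, where conjugation is invisible, never at the level of groups. Once one is comfortable that all the relevant group-level maps are well-defined only up to conjugacy but become honestly functorial after applying $H^i$, the splitting $\psi\circ\phi = \mathrm{id}$ in $\FI$ (valid since $t \le s$) together with covariant functoriality of $H^i(\Gamma_{n,\bullet})$ delivers the left inverse and hence injectivity with no further computation.
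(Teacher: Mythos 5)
Your first route collapses: an injection $\phi\in\Hom_\FI(\mathbf t,\mathbf s)$ with $t<s$ is \emph{never} a split monomorphism in $\FI$, because $\Hom_\FI(\mathbf s,\mathbf t)=\emptyset$ --- there is no injection from a larger set to a smaller one, so no $\psi$ with $\psi\circ\phi=\mathrm{id}_{\mathbf t}$ exists at all. You notice this (your candidate $\psi$ is not injective) but then assert anyway that ``$t\le s$ guarantees we may pick $\psi$ with $\psi\phi=\mathrm{id}$,'' and your closing paragraph leans on exactly this false splitting. So the only viable content of your proposal is the second, group-level route: split the forgetful homomorphism $\Gamma_{n,s}\to\Gamma_{n,t}$ by a homomorphism $\Gamma_{n,t}\to\Gamma_{n,s}$ and apply $H^i$. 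This is indeed the paper's strategy, but as written your ``reinsert $s-t$ trivial hairs'' map is not well-defined: an element of $\Gamma_{n,t}$ is a homotopy class of self-equivalences fixing only the $t$ marked points, so it need not fix the point where you attach the new hairs; different choices of representative (or of homotopy making the attachment point fixed) change the extension by a point-pushing element of $\Gamma_{n,s}$, which is not inner there. The fix, as in \cite{CHKV1}, Proposition 1.2, is to attach the new hairs at one of the $t$ marked points, which \emph{is} fixed pointwise --- and this is precisely why the construction needs $t\ge 1$.

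That leaves the genuinely missing case $t=0$, i.e.\ the structure maps $H^i(\Gamma_{n,0})=H^i(\Out(F_n))\to H^i(\Gamma_{n,s})$ coming from the unique morphism $\mathbf 0\to\mathbf s$. Here there is no marked point at which to reinsert hairs, no group-level section is produced by your construction (already $\Aut(F_n)\to\Out(F_n)$ is not split by such an argument), and the paper has to invoke a different input: \cite{CHKV1}, Theorem 1.4, which shows the map $\Aut(F_n)\to\Out(F_n)$ splits on rational (co)homology, so the corresponding structure maps are still injective even though there is no splitting of groups. Your proposal neither proves nor flags this case, so as it stands the argument establishes injectivity only for $1\le t\le s$, and even there only after repairing the well-definedness of the section as above.
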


\begin{proof}
It suffices to show that the maps $\Gamma_{n,s}\rightarrow \Gamma_{n,t}$ used to build the structure maps are split. In the case $t \neq0$ this is a straightforward adaptation of a result in \cite{CHKV1} (see Proposition 1.2) where they prove that there is a splitting $\Gamma_{n,s} \rightarrow \Gamma_{n,s-k}$ when $k<s$. The remaining case where $t=0$ is also dealt with in \cite{CHKV1}, (see Theorem 1.4) where they prove that natural map $\Aut(F_n) \rightarrow \Out(F_n)$ splits on the level of (rational) (co)homology.
\end{proof}

\subsection{Rank 1} In rank 1 the situation is somewhat simpler and we don't need to appeal to a spectral sequence argument to witness Theorem B.

\begin{proposition} As FI-modules
\begin{equation}
H^i(\Gamma_{1,\bullet}) = \left\{\begin{array}{ccc}P(1^i) &  & i~\mbox{even} \\0 &  & i~\mbox{odd}\end{array}\right. \notag
\end{equation}
and as such $H^i(\Gamma_{1,\bullet})$ satisfies Theorem B.
\end{proposition}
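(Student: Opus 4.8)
The plan is to compute $\Gamma_{1,s}$ and its cohomology directly, then recognize the answer as one of the FI-modules $P(\lambda)$ already under control. First I would identify the group $\Gamma_{1,s}$ explicitly. Since $\Gamma_{1,0} = \Out(F_1) = \Out(\mathbb{Z}) = \mathbb{Z}/2$, the rank-$1$ case is genuinely different from Proposition~\ref{Prop:SES} (which requires $n>1$), and one cannot simply quote the $F_n^s$-extension. Instead, working with $X_{1,s}$ a circle with $s$ hairs attached, a self-homotopy-equivalence fixing the $s$ marked points is determined up to homotopy by how the $s-1$ ``gaps'' between consecutive marked points (in cyclic order around the circle) wind around the loop, subject to one relation; this identifies the ``orientation-preserving'' part with $\mathbb{Z}^{s-1}$, and allowing the loop to be reversed gives a semidirect product $\mathbb{Z}^{s-1} \rtimes \mathbb{Z}/2$, where $\mathbb{Z}/2$ acts by negation. (For $s=0$ this degenerates to $\mathbb{Z}/2$; for $s\geq 1$ the marked points break the symmetry so one should double-check the $s=1$ and $s=2$ base cases against \cite{CHKV1}.) This matches the description in \cite{CHKV1} that $H^i(\Gamma_{1,s})$ relates to $\bigwedge^i \mathbbm{k}^{s-1}$.

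Next I would compute the rational cohomology. Over $\mathbbm{k}$ of characteristic zero, $H^i(\mathbb{Z}^{s-1} \rtimes \mathbb{Z}/2; \mathbbm{k}) = H^i(\mathbb{Z}^{s-1};\mathbbm{k})^{\mathbb{Z}/2} = \left(\bigwedge^i \mathbbm{k}^{s-1}\right)^{\mathbb{Z}/2}$, since averaging over the finite group $\mathbb{Z}/2$ is exact on $\mathbbm{k}$-vector spaces. The generator of $\mathbb{Z}/2$ acts on $\mathbbm{k}^{s-1}$ by $-1$, hence on $\bigwedge^i \mathbbm{k}^{s-1}$ by $(-1)^i$. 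Therefore the invariants are all of $\bigwedge^i \mathbbm{k}^{s-1}$ when $i$ is even, and $0$ when $i$ is odd, giving the stated formula pointwise.

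Finally I would promote this to a statement of FI-modules. The key point is that $\mathbbm{k}^{s-1}$ is exactly the standard representation, which as an FI-module is $P(1) = M(1)/\mathbbm{k}$ in the notation of Proposition~\ref{Prop:Plambda} (so $P(1)(\mathbf{s}) = P_{(s-1,1)}$, of dimension $s-1$), and $\bigwedge^i$ of it is the FI-module whose value at $\mathbf{s}$ is $P_{(s-i,1^i)} = P(1^i)_s$ for $s$ large; this is precisely $P(1^i)$ by Pieri's rule / the decomposition of $\bigwedge^i$ of the standard representation. One must check that the FI-module structure maps on $H^i(\Gamma_{1,\bullet})$ constructed in Proposition~\ref{Prop:FImodule} agree with the ones on $\bigwedge^i P(1)$ — this follows because an injection $\phi\colon \mathbf{t}\hookrightarrow\mathbf{s}$ realized by a hair-permuting/forgetting homotopy equivalence induces on $H^1 = (\mathbbm{k}^{s-1})^* $ exactly the dual of the standard inclusion of permutation modules modulo the diagonal, i.e.\ the $P(1)$ structure map, and cohomology is multiplicative so higher $H^i$ get $\bigwedge^i$ of this. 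Then $P(1^i)$ has weight $\leq i$ and stability degree $\leq 1 \leq n = 1$ by Proposition~\ref{Prop:Plambda}, so Theorem~B holds (finite generation of $P(1^i)$ as a sub-FI-module of $M(1^i)$ is also part of Proposition~\ref{Prop:Plambda}).

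The main obstacle is the first step: pinning down $\Gamma_{1,s}$ precisely enough — in particular getting the rank of the free abelian part right as $s-1$ rather than $s$, and correctly handling the $\mathbb{Z}/2$ and the small-$s$ degenerate cases — since the general machinery of Proposition~\ref{Prop:SES} is unavailable for $n=1$. Once the group and its $\mathbb{Z}/2$-action on $H^*$ are correctly identified, the rest is the exactness of invariants in characteristic zero together with citing Proposition~\ref{Prop:Plambda}.
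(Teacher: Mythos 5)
Your proposal is correct in outline but takes a genuinely different route from the paper. The paper does not compute $\Gamma_{1,s}$ at all: it quotes the $\mathfrak{S}_s$-module computation $H^i(\Gamma_{1,s})=P_{(s-i,1^i)}$ (for $i$ even) from \cite{CHKV1}, Proposition 2.7, and then uses a rigidity argument to upgrade it to FI-modules --- since each $H^i(\Gamma_{1,s})$ is a \emph{single} irreducible, the structure maps are forced (by multiplicity one in the branching rule) to be either zero or those of $P(1^i)$, and Proposition \ref{Prop:Injective} (injectivity of the structure maps, coming from the splittings $\Gamma_{1,s}\to\Gamma_{1,t}$) rules out zero; then Proposition \ref{Prop:Plambda} gives weight $\leq i$ and stability degree $\leq 1$. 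You instead re-derive the pointwise computation from an explicit identification $\Gamma_{1,s}\cong \mathbb{Z}^{s-1}\rtimes\mathbb{Z}/2$ (correctly noting that Proposition \ref{Prop:SES} is unavailable for $n=1$) and then verify directly that the structure maps of Proposition \ref{Prop:FImodule} agree with those of $\bigwedge^i P(1)\cong P(1^i)$. Your route is more self-contained and makes the FI-structure maps explicit, at the cost of the group-theoretic bookkeeping; the paper's route is shorter and sidesteps the direct comparison of structure maps by using uniqueness of the equivariant map plus injectivity. One caveat in your write-up: the phrase ``$\phi$ induces on $H^1=(\mathbbm{k}^{s-1})^*$ the dual of the standard inclusion, and cohomology is multiplicative'' cannot be taken literally, since $H^1(\Gamma_{1,s})=\bigl(\mathbbm{k}^{s-1}\bigr)^{\mathbb{Z}/2}=0$ and the ring $H^*(\Gamma_{1,s})$ is not generated in degree one; the correct formulation is to use the natural isomorphism $H^i(\Gamma_{1,s})\cong\bigl(\textstyle\bigwedge^i H^1(\mathbb{Z}^{s-1})\bigr)^{\mathbb{Z}/2}$ (exactness of invariants in characteristic zero), observe that the maps $\Gamma_{1,s}\to\Gamma_{1,t}$ carry the normal subgroup $\mathbb{Z}^{s-1}=\ker(\Gamma_{1,s}\to\mathbb{Z}/2)$ into $\mathbb{Z}^{t-1}$ compatibly with the $\mathbb{Z}/2$-quotients, and then apply naturality of this isomorphism together with the identification of the induced map on the abelianized kernels with the $P(1)$ structure map. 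With that rephrasing (and your check of the small-$s$ cases), the argument goes through and, like the paper, concludes via Proposition \ref{Prop:Plambda}.
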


\begin{proof}

The $\mathfrak{S}_s$-module structure of the cohomology in rank 1 was computed in \cite{CHKV1}, Proposition 2.7 to be
\begin{equation}
H^i(\Gamma_{1,s}) = \left\{\begin{array}{ccc}P_{(s-i,1^i)} &  & i~\mbox{even} \\0 &  & i~\mbox{odd}\end{array}\right. \notag
\end{equation}
We need only consider the even case, where we have an FI-module which behaves like $P(1^i)$ when evaluated at any finite set. The fact that the irreducible decomposition contains exactly one irreducible at each finite set implies that the structure maps either agree with those of $P(1^i)$ or are zero. Proposition \ref{Prop:Injective} says that the structure maps are injective, so we have an equality of FI-modules $H^i(\Gamma_{1,\bullet})=P(1^i)$ when $i$ is even. As for satisfying Theorem B, $P(1^i)$ has stability degree $\leq1$ and weight $\leq i$ by Proposition \ref{Prop:Plambda}, as desired.
\end{proof}

\section{The spectral sequence argument} \label{SecSSA} \label{SecGG}
With the rank 1 case taken care of we proceed to prove Theorem B in higher rank by establishing a spectral sequence of FI-modules converging to $H^i(\Gamma_{n,\bullet})$. Throughout this section fix $i \geq 0$ and $n \geq 2$.

\begin{lemma} \label{Lem:SS}There is a spectral sequence of FI-modules
\begin{equation}
E_2^{pq} = \bigoplus_{|\lambda|=q, \lambda_1\leq n} C_{p, \lambda} \otimes M(\lambda) \Rightarrow_p H^i(\Gamma_{n,\bullet}), \notag
\end{equation}
converging to the FI-module $H^i(\Gamma_{n,\bullet})$, where $C_{p, \lambda}$ is a constant FI-module depending only on $p$ and $\lambda$.
\end{lemma}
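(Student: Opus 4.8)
The plan is to build the spectral sequence from the Leray--Serre spectral sequence attached to the group extension of Proposition \ref{Prop:SES}, and then to identify its $E_2$-page as a sum of free FI-modules. The starting point is, for each $s$, the extension $1 \to F_n^s \to \Gamma_{n,s} \to \Out(F_n) \to 1$, whose Lyndon--Hochschild--Serre spectral sequence reads
\begin{equation}
E_2^{pq}(s) = H^p\bigl(\Out(F_n); H^q(F_n^s)\bigr) \Rightarrow H^{p+q}(\Gamma_{n,s}). \notag
\end{equation}
First I would check that this whole spectral sequence is natural in $s$ with respect to the $\FI$-morphisms constructed in Proposition \ref{Prop:FImodule}: the maps $\Gamma_{n,s} \to \Gamma_{n,t}$ are compatible with the projections to $\Out(F_n)$ and carry $F_n^s$ to $F_n^t$ in the way dictated by the injection $\phi$ (forgetting the hairs not in the image and relabelling), so naturality of the LHS spectral sequence in the extension gives a spectral sequence in the category $\FIMod$. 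Since $\Out(F_n)$ is fixed, the $E_2$-page is $H^p(\Out(F_n); H^q(F_n^{\bullet}))$, and the only $\FI$-module dependence sits in the coefficient system $H^q(F_n^{\bullet})$.

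**Identifying the $E_2$-page.** The key computation is the $\FI$-module structure of $H^q(F_n^{\bullet})$. By the Künneth theorem over a characteristic-zero field, $H^q(F_n^s) = \bigoplus H^{a_1}(F_n)\otimes\cdots\otimes H^{a_s}(F_n)$ summed over $a_1+\cdots+a_s = q$; since $F_n$ is a free group, $H^0(F_n) = \mathbbm{k}$ and $H^1(F_n) = \mathbbm{k}^n$ and all higher cohomology vanishes, so each summand is a tensor product of $q$ copies of $\mathbbm{k}^n$ (indexed by a $q$-element subset of the $s$ factors) with $\mathbbm{k}$'s elsewhere. Organising this by which coordinates carry the $H^1$ factor shows that $H^q(F_n^{\bullet})$ is, as an $\FI$-module, isomorphic to $M(\mathbf{q})$ tensored with a fixed $\mathfrak{S}_q$-representation on $(\mathbbm{k}^n)^{\otimes q}$ — more precisely it decomposes as $\bigoplus_{|\lambda| = q,\ \lambda_1 \le n} c_\lambda\, M(\lambda)$, where the multiplicities $c_\lambda$ come from decomposing $(\mathbbm{k}^n)^{\otimes q}$ as an $\mathfrak{S}_q$-representation (so only partitions with at most $n$ rows appear, giving the constraint $\lambda_1 \le n$ after the appropriate transpose/Schur--Weyl bookkeeping — this indexing convention needs to be pinned down carefully against the conventions in the excerpt). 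Taking $\Out(F_n)$-cohomology is exact on direct sums and commutes with tensoring by the constant (in $\FI$) vector spaces that appear, so
\begin{equation}
E_2^{pq} = \bigoplus_{|\lambda| = q,\ \lambda_1 \le n} H^p\bigl(\Out(F_n); C_\lambda\bigr) \otimes M(\lambda), \notag
\end{equation}
and setting $C_{p,\lambda} := H^p(\Out(F_n); C_\lambda)$, a constant $\FI$-module depending only on $p$ and $\lambda$, gives exactly the claimed form. Finally one restricts attention to total degree $p+q = i$ (the spectral sequence converging to $H^i$), which is what the indexing "$\Rightarrow_p$" records.

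**Main obstacle.** I expect the genuine difficulty to be twofold. The first, more technical, issue is establishing naturality of the Lyndon--Hochschild--Serre spectral sequence along the $\FI$-morphisms: the maps $\Gamma_{n,s}\to\Gamma_{n,t}$ are only well-defined up to inner automorphism (as emphasised in the proof of Proposition \ref{Prop:FImodule}), so one must check that the induced map on the whole spectral sequence — not just on cohomology of the extension groups — is well-defined, which amounts to the standard fact that inner automorphisms act trivially on the LHS spectral sequence of an extension from $E_2$ onward. The second, more bookkeeping-heavy, issue is getting the Schur--Weyl decomposition of $(\mathbbm{k}^n)^{\otimes q}$ to line up with the padded-partition notation $P(\lambda)_s$ and the free module $M(\lambda)$ used in the excerpt, in particular correctly tracking the sign twist (the $V^{\wedge k}$ versus $V^{\otimes k}$ distinction recalled earlier) and producing the row-length bound $\lambda_1 \le n$ rather than a column-length bound. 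Neither step is conceptually deep, but both require care to state precisely; everything else (Künneth, exactness of $H^p(\Out(F_n); -)$ on the relevant coefficient modules, assembling $C_{p,\lambda}$) is routine.
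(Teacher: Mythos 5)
Your proposal is correct and follows essentially the same route as the paper: the Lyndon--Hochschild--Serre spectral sequence of $1 \to F_n^s \to \Gamma_{n,s} \to \Out(F_n) \to 1$, made functorial in $\FI$ (with the inner-automorphism issue you note handled exactly as you suggest), followed by K\"unneth and Schur--Weyl duality to identify the $E_2$-page. The only piece you defer --- the sign twist making $H^q(F_n^s) = H^{\wedge q}\circ P_{(s-q)}$ and the resulting conjugate-partition bookkeeping, which is where the paper gets $C_{p,\lambda}=H^p(\Out(F_n);\mathbb{S}_{\lambda'}H)$ and the bound $\lambda_1\le n$ from vanishing of $\mathbb{S}_{\lambda}H$ for partitions with more than $n$ rows --- is indeed the routine verification you anticipate.
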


\begin{proof} Let $$C_{p, \lambda}=H^p(\Out(F_n); \mathbb{S}_{\lambda'} H)$$ where $H:= H^1(F_n)=\mathbbm{k}^n$ and where $\mathbb{S}_{\lambda'}$ is the Schur functor corresponding to the conjugate partition $\lambda'$. Define $E_2^{pq}$ as in the statement of the lemma.\\

We will show that, when evaluated at the finite set $\mathbf{s}=\{1, \ldots, s\}$, this gives the second page of the Leray-Serre spectral sequence of groups associated to the short exact sequence 
$$1 \rightarrow F_n^s \rightarrow \Gamma_{n,s} \rightarrow \Out(F_n) \rightarrow 1$$
from Proposition \ref{Prop:SES}. In other words, we will show that
\begin{equation}
(E_2^{pq})_s = H^p(\Out(F_n);H^q(F_n^s)) \Rightarrow_p H^{p+q}(\Gamma_{n,s}) \notag
\end{equation}
as a spectral sequence of groups. Functoriality of the Leray-Serre spectral sequence will complete the proof.\\

First observe that, by the K\"unneth formula, $H^q(F_n^s)=H^{\wedge q} \circ P_{(s-q)}$ as an $\mathfrak{S}_s$-module (this is proved carefully in \cite{CHKV1}, Lemma 2.4). We have,
\begin{equation}
H^p(\Out(F_n); H^q(F_n^s)) = H^p(\Out(F_n);H^{\wedge q}\circ P_{(s-q)}). \notag
\end{equation}
The $\Out(F_n)$ action on $H^{\wedge q}$ factors through a $GL_n(\mathbb{Z})$ action. We decompose using Schur-Weyl duality giving,
\begin{align}
H^p(\Out(F_n);H^{\wedge q}\circ P_{(s-q)}) &= \bigoplus_{|\lambda| = q} H^p(\Out(F_n); \mathbb{S}_{\lambda}H \otimes P_{\lambda'} \circ P_{(s-q)} ) \notag \\
&= \bigoplus_{|\lambda| = q} H^p(\Out(F_n); \mathbb{S}_{\lambda}H) \otimes P_{\lambda'} \circ P_{(s-q)} \notag \\
&= \bigoplus_{|\lambda| = q} H^p(\Out(F_n); \mathbb{S}_{\lambda}H) \otimes M(\lambda')_s \notag
\end{align}
where $\lambda'$ is the conjugate partition of $\lambda$. Now observe that $\mathbb{S}_\lambda H=0$ if $\lambda$ has more than $n$ rows by the character formula (for details see \cite{FH}). Therefore ${\lambda'}$ has at most $n$ columns, i.e., $\lambda'_1 \leq n$. Therefore
\begin{equation}
H^p(\Out(F_n); H^q(F_n^s)) = \bigoplus_{\substack{|\lambda'|=q \\ \lambda'_1\leq n}} H^p(\Out(F_n); \mathbb{S}_{\lambda}H) \otimes M(\lambda')_s = (E_2^{pq})_s .\notag
\end{equation}
Swapping $\lambda$ with $\lambda'$ completes the proof.
\end{proof}

We now describe the stability type and weight of the FI-module $E_2^{pq}$.

\begin{lemma} \label{E2StabW}
$E_2^{pq}$ has stability type $(0,n)$ and weight $q$.
\end{lemma}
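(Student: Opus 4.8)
The plan is to read off both invariants directly from the explicit description of $E_2^{pq}$ as a direct sum of free FI-modules furnished by Lemma \ref{Lem:SS}, namely
$$E_2^{pq} = \bigoplus_{|\lambda|=q,\ \lambda_1 \leq n} C_{p,\lambda} \otimes M(\lambda),$$
where each $C_{p,\lambda} = H^p(\Out(F_n); \mathbb{S}_{\lambda'}H)$ is a \emph{constant} (hence finite-dimensional, since $\Out(F_n)$ has finitely generated cohomology in each degree with these coefficients) FI-module. The key point is that tensoring an FI-module with a finite-dimensional constant FI-module, and taking finite direct sums, does not change stability type or increase weight.

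First I would handle the weight. By Proposition \ref{stabM}, each $M(\lambda)$ with $|\lambda| = q$ has weight $\leq |\lambda| = q$. Since weight is preserved under subquotients and extensions (the remark following the definition of weight), and in particular under finite direct sums and tensoring with a constant finite-dimensional module, each summand $C_{p,\lambda} \otimes M(\lambda)$ has weight $\leq q$, and hence so does $E_2^{pq}$. For the reverse inequality — that the weight is exactly $q$, not merely $\leq q$ — I would note that at least one $\lambda \vdash q$ with $\lambda_1 \leq n$ contributes a nonzero $C_{p,\lambda}$ for the relevant range of $p$ (e.g.\ $\lambda = (1^q)$ with $p = 0$ gives $C_{0,(1^q)} = (\mathbb{S}_{(q)}H)^{\Out(F_n)} = (\Sym^q H)^{\Out(F_n)}$, which is nonzero in the appropriate degenerate cases), so that $E_2^{pq}$ contains an irreducible constituent $P(\lambda)_s$ with $|\lambda| = q$; if one only wants weight $\leq q$ this step can be omitted, but the statement asserts weight $q$, so I would include a brief remark pinning down a nonzero summand.

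For the stability type, by Proposition \ref{stabM} each $M(\lambda)$ with $\lambda_1 \leq n$ has stability type $(0, \lambda_1)$, and since $\lambda_1 \leq n$ this is dominated by $(0,n)$: the injectivity degree is $0 \leq 0$ and the surjectivity degree is $\lambda_1 \leq n$. Tensoring $M(\lambda)$ with the constant module $C_{p,\lambda}$ preserves stability type (the coinvariant functors commute with tensoring by a fixed vector space, so the relevant maps are just direct sums of $\dim C_{p,\lambda}$ copies of the maps for $M(\lambda)$), and a finite direct sum of FI-modules of stability type $(0,n)$ again has stability type $(0,n)$ — this follows from part (2) of Proposition \ref{CEF2.44} applied to the filtration of the direct sum by partial sums, or simply because the coinvariant maps of a direct sum are the direct sums of the coinvariant maps, and a direct sum of isomorphisms is an isomorphism. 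Hence $E_2^{pq}$ has stability type $(0,n)$.

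I do not expect a serious obstacle here: the entire content is bookkeeping around Proposition \ref{stabM} plus the behaviour of weight and stability type under finite direct sums and tensoring with constant modules. The only point requiring a little care is the claim that these operations genuinely preserve stability \emph{type} (and not just, say, stability degree); for the direct sum this is where I would invoke Proposition \ref{CEF2.44}(2), and for the tensor with a constant module it is the elementary observation that $(-)_{\mathfrak{S}_s}$ is additive and $\mathbbm{k}$-linear, so $\bigl(C \otimes V\bigr)_{\mathfrak{S}_s} \cong C \otimes (V)_{\mathfrak{S}_s}$ naturally in the FI-structure maps.
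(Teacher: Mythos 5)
Your argument is essentially the paper's own proof: both read the result off Proposition \ref{stabM} for each $M(\lambda)$ with $|\lambda|=q$ and $\lambda_1\le n$, and observe that tensoring with the constant FI-modules $C_{p,\lambda}$ and taking the (finite) direct sum alters neither stability type nor weight. The one superfluous---and shaky---addition is your attempt to pin the weight at exactly $q$: the paper (and its later use in the proof of Theorem B) only needs weight $\le q$, and your proposed witness $C_{0,(1^q)}=(\Sym^q H)^{\Out(F_n)}$ is not shown to be nonzero (such invariants typically vanish), but as you note yourself this step can simply be dropped.
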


\begin{proof}
$C_{p, \lambda}$ are constant FI-modules and thus do not contribute to weight or stability type. $M(\lambda)$ has stability type $(0,\lambda_1)$ and weight $|\lambda|$ by Proposition \ref{stabM}. $E_2^{pq}$ is obtained by summing over partitions $\lambda \vdash q$ with at most $n$ columns. In particular, each $\lambda$ satisfies $\lambda_1 \leq n$ and $|\lambda|=q$.
\end{proof}

We are ready to give the spectral sequence argument.

\begin{lemma} \label{Spec}
The FI-modules $E_k^{pq}$ on the $k$th page of the spectral sequence have stability degree $n$.
\end{lemma}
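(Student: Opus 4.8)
The plan is to induct on the page number $k$, using Proposition \ref{CEF2.44}(1) to control how stability type propagates under passage to homology. For the base case $k=2$, Lemma \ref{E2StabW} gives that $E_2^{pq}$ has stability type $(0,n)$; in particular both its injectivity degree and its surjectivity degree are $\leq n$, so its stability degree is $\leq n$. I will in fact carry through the induction with the slightly stronger hypothesis that every entry on page $k$ has injectivity degree $\leq n$ \emph{and} surjectivity degree $\leq n$.

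For the inductive step, recall that in this first-quadrant cohomological spectral sequence $E_{k+1}^{pq}$ is the homology of the three-term complex
$$E_k^{p-k,\,q+k-1} \xrightarrow{\ d_k\ } E_k^{pq} \xrightarrow{\ d_k\ } E_k^{p+k,\,q-k+1},$$
which is a complex of FI-modules because the whole spectral sequence is one by Lemma \ref{Lem:SS}. Write $U = E_k^{p-k,q+k-1}$, $V = E_k^{pq}$, $W = E_k^{p+k,q-k+1}$, with stability types $(\ast,A)$, $(B,C)$, $(D,\ast)$ respectively, where by the inductive hypothesis the surjectivity degrees $A,C$ and the injectivity degrees $B,D$ are all $\leq n$. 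Proposition \ref{CEF2.44}(1) then gives that $E_{k+1}^{pq} = \ker d_k/\im d_k$ has injectivity degree $\leq \max(A,B) \leq n$ and surjectivity degree $\leq \max(C,D) \leq n$. At the boundary of the first quadrant some of $U$, $W$ vanish, which only improves these bounds. Hence every entry on page $k+1$ again has injectivity and surjectivity degree $\leq n$, closing the induction; in particular $E_k^{pq}$ has stability degree $\leq n$ for every $k \geq 2$.

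I do not expect a real obstacle: the argument is formal once one has Proposition \ref{CEF2.44}(1) together with the identification of the $E_2$-page in Lemma \ref{Lem:SS}. The only point needing care is bookkeeping — tracking which coordinate (injectivity $\mathcal{I}$ or surjectivity $\mathcal{S}$) each of the three terms contributes, and observing that the bound $n$ is stable under the induced operation $(\mathcal{I},\mathcal{S}) \mapsto \bigl(\max(\mathcal{S},\mathcal{I}),\,\max(\mathcal{S},\mathcal{I})\bigr)$, which holds precisely because we begin with both coordinates $\leq n$. Finally, since the spectral sequence is first-quadrant, for each fixed $(p,q)$ we have $E_k^{pq} = E_\infty^{pq}$ once $k$ is large, so the same bound passes to the $E_\infty$-page, which is what will be needed to complete the proof of Theorem B.
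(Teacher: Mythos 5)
Your proof is correct and follows essentially the same route as the paper: induct on the page, start from the stability type $(0,n)$ of $E_2^{pq}$ from Lemma \ref{E2StabW}, and apply Proposition \ref{CEF2.44}(1) to the three-term complex computing $E_{k+1}^{pq}$. The only difference is bookkeeping — the paper tracks the sharper fact that the injectivity degree stays $0$ in columns $p=0,1$, while you carry the coarser (and sufficient) hypothesis that both degrees are $\leq n$ everywhere.
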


\begin{proof}
We denote the stability type of $E_k^{pq}$ by $(\mathcal{I}^{pq}_k, \mathcal{S}_k^{pq})$. We use Lemma \ref{CEF2.44} and the fact that the spectral sequence is concentrated in the first quadrant to inductively produce bounds on stability type in subsequent pages.\\

On the 2nd page all terms $E_2^{pq}$ have stability type at most $(0, n)$. To compute the terms on the third page we use the differentials $d_2^{pq}$ of bidegree $(2,-1)$. We indicate the stability type of terms for convenience.
\begin{center}
\begin{tikzpicture}[scale=1]
\node at (-0.2,0) {$E_2^{p-2, q+1}$}; \draw[->] (0.5,0) -- (2.5,0); \node at (1.5, 0.3) {\small{$d_2^{p+2, q-1}$}};
\node at (3,0) {$E_2^{pq}$}; \draw[->] (3.5,0) -- (5.1,0); \node at (4.3, 0.3) {\small{$d_2^{pq}$}};
\node at (6,0) {$E_2^{p+2, q-1}$};
\node at (-0.2,-0.7) {\small{$(0,\mathcal{S}^{p-2, q+1}_2)$}};
\node at (3,-0.7) {\small{$(0,n)$}};
\node at (6,-0.7) {\small{$(0,n)$}};
\end{tikzpicture}
\end{center}
where
\begin{equation}
\mathcal{S}^{p-2,q+1}_2 = \left\{\begin{array}{ccc}0 & ~ & p=0,1 \\n & ~ & p \geq 2\end{array}\right. \notag
\end{equation}
depending on whether or not $E_2^{p-2,q+1}$ is in the first quadrant. Now Lemma \ref{CEF2.44} gives us that
\begin{align}
\mathcal{I}_3^{pq} &= \max(\mathcal{S}_2^{p-2, q+1}, \mathcal{I}_2^{pq})= \left\{\begin{array}{ccc}0 & ~ & p=0,1 \\n & ~ & p \geq 2\end{array}\right. \notag \\
\mathcal{S}_3^{pq} &= \max(\mathcal{S}_2^{pq}, \mathcal{I}_2^{p+2, q-1}) = n. \notag
\end{align}
We proceed similarly with the inductive step. We have
\begin{center}
\begin{tikzpicture}[scale=1]
\node at (-0.5,0) {$E_k^{p-k, q+k-1}$}; \draw[->] (0.5,0) -- (2.5,0); \node at (1.5, 0.3) {\small{$d_k^{p+k, q-k+1}$}};
\node at (3,0) {$E_k^{pq}$}; \draw[->] (3.5,0) -- (5.1,0); \node at (4.3, 0.3) {\small{$d_k^{pq}$}};
\node at (6.2,0) {$E_k^{p+k, q-k+1}$};
\node at (-0.4,-0.7) {\small{$(\ast,\mathcal{S}_k^{p-k,q+k-1})$}};
\node at (3,-0.7) {\small{$(\mathcal{I}_k^{pq},n)$}};
\node at (6,-0.7) {\small{$(n,\ast)$}};
\end{tikzpicture}
\end{center}
and can finally conclude that
\begin{align}
\mathcal{I}_{k+1}^{pq} &= \max(\mathcal{S}_k^{p-k,q+k-1}, \mathcal{I}_k^{pq})= \left\{\begin{array}{ccc}0 & ~ & p=0,1 \\n & ~ & p \geq 2\end{array}\right. \notag \\
\mathcal{S}_{k+1}^{pq} &= \max(\mathcal{S}_k^{pq}, \mathcal{I}_k^{p+k, q-k+1}) = n, \notag
\end{align}
since $\mathcal{S}_k^{p-k,q+k-1}=n$ unless $p<k$ when it is 0, and $\mathcal{I}_k^{pq}=n$ unless $p=0,1$ when it is 0.
\end{proof}

We are now ready to prove our main result.

\begin{proof}[Proof of Theorem B]
First observe that $E_\infty^{p, i-p}$ has weight $\leq i$ and stability degree $\leq n$. Indeed $E_\infty^{p, i-p}$ is a subquotient of $E_2^{p, i-p}$ and thus has weight $\leq i-p \leq i$ by Lemma \ref{E2StabW}, and $E_\infty^{p, i-p}$ has stability degree $n$ by Lemma \ref{Spec}.\\

We have that $E_2^{pq}$ is a first quadrant spectral sequence of FI-modules converging to the FI-module $H^{p+q}(\Gamma_{n,\bullet})$. This tells us that there exists a natural filtration of $H^i(\Gamma_{n,\bullet})$ whose graded quotients are $E_\infty^{p, i-p}$ and so, by Proposition \ref{CEF2.45}, $H^i(\Gamma_{n,\bullet})$ has stability degree $n$. Moreover, since weight is preserved under extensions, $H^i(\Gamma_{n,\bullet})$ has weight $\leq i$.
\end{proof}

\begin{remark}
In \cite{JR} Jim\'enez Rolland develops a general framework for dealing with spectral sequences of FI-modules. In particular, the description given in the proof of Lemma \ref{Lem:SS} shows that $H^q(F_n^s)$ has weight and stability degree $\leq q$. Thus, in the notation of \cite{JR} Theorem 5.3 we have shown that $\beta = 1$ and that $H^i(\Gamma_{n,\bullet})$ has weight $\leq i$ and stability type $(2i,i)$. We note that this recovers the representation stability bounds of \cite{CHKV1} upon which we just improved.
\end{remark}

\bibliographystyle{acm}

\end{document}